\title{Minimally critical endomorphisms of $\PP^N$}
\author{Patrick Ingram}
\email{pingram@yorku.ca}
\address{York University, Toronto, Canada}
\date{\today}
\renewcommand{\epsilon}{\varepsilon}
\newcommand{\PP}{\mathbb{P}}
\newcommand{\ZZ}{\mathbb{Z}}
\newcommand{\CC}{\mathbb{C}}
\newcommand{\QQ}{\mathbb{Q}}
\renewcommand{\AA}{\mathbb{A}}
\newcommand{\Res}{\operatorname{Res}}
\newcommand{\ord}{\operatorname{ord}}
\newcommand{\GL}{\operatorname{GL}}
\newcommand{\PGL}{\operatorname{PGL}}
\newcommand{\Hom}{{\operatorname{Hom}}}
\newcommand{\MinCrit}{{\operatorname{MinCrit}}}
\newtheorem{theorem}{Theorem}
\newtheorem{lemma}[theorem]{Lemma}
\newtheorem{corollary}[theorem]{Corollary}
\newtheorem{prop}[theorem]{Proposition}
\theoremstyle{definition}
\begin{document}
\begin{abstract}
We study the dynamics of the map $f:\PP^N\to \PP^N$ defined by $f(\mathbf{X})=A\mathbf{X}^d$, 
for $A\in\PGL_{N+1}(\overline{\QQ})$ and $d\geq 2$. When $d>N^2+N+1$, we show that the critical height of such a morphism is comparable to its height in moduli space, confirming a case of a natural generalization of a conjecture of Silverman.
\end{abstract}

\maketitle

A non-linear rational function $f:\PP^1_\CC\to \PP^1_\CC$ has at least two critical points, and so if one is interested in exploring the dynamics of rational functions in one variable, a natural place to start is the family of functions with two critical points and no more; a lovely study of these~\emph{bicritical} functions was carried out by Milnor~\cite{milnor}.

Over $\PP^N$ one might consider a similar minimality condition on ramification, 
restricting attention to \emph{minimally critical} morphisms, that is, those whose critical locus is supported on $N+1$ hyperplanes. It is easy to see from the degree of the ramification divisor that this is the least number, and that such hyperplanes must meet properly. After a change of variables, each such endomorphism can be written as
\begin{equation}\label{eq:Aform}f_A(\mathbf{X})=A\mathbf{X}^d,\end{equation}
for some $A\in \PGL_{N+1}(\CC)$, where $\mathbf{X}^d$ is the entrywise $d$th power of $\mathbf{X}$.

The main result of this paper establishes a version of Silverman's critical height conjecture in the case of minimally critical morphisms of sufficiently high degree. To make this explicit, let $\hat{h}_{\mathrm{crit}}(f)$ be the \emph{critical height} of $f:\PP^N_{\overline{\QQ}}\to\PP^N_{\overline{\QQ}}$, defined as the canonical height (in the sense of Zhang~\cite{zhang}) of the ramification divisor. Generalizing a conjecture of Silverman~\cite[Conjecture~6.29, p.~101]{barbados} to $\PP^N$, it is proposed in~\cite{pnheights} that for any ample Weil height $h_{\mathsf{M}_d^N}$ on the moduli space $\mathsf{M}_d^N$ of endomorphisms of $\PP^N$ with degree $d\geq 2$, there ought to be a proper Zariski-closed subset of $\mathsf{M}_d^N$ away from which $h_{\mathsf{M}_d^N}$ is comparable to the critical height $\hat{h}_{\mathrm{crit}}$ on $\mathsf{M}_d^N$. When $d$ is large, our main result establishes this conjecture on the locus $\operatorname{MinCrit}_d^N\subseteq \mathsf{M}_d^N$ of minimally critical morphisms (in this case with no exceptional subvariety).

\begin{theorem}\label{th:main}
Let $N\geq 1$ and $d> N^2+N+1$. Then for any ample Weil height $h_{\mathsf{M}_d^N}$ on $\mathsf{M}_d^N$  we have
	\[h_{\mathsf{M}_d^N}(f)\asymp \hat{h}_{\mathrm{crit}}(f),\]
	 for $f\in \operatorname{MinCrit}_d^N$,
	with implied constants depending only on $N$ and $d$.
	\end{theorem}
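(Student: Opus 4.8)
The plan is to bracket $\hat{h}_{\mathrm{crit}}$ between two multiples of $h_{\mathsf{M}_d^N}$ using a concrete model of $\MinCrit_d^N$. Conjugating $f_A$ by a monomial matrix $DP$ ($D$ diagonal, $P$ a permutation) replaces $A$ by $P^{-1}D^{-1}AD^{d}P$, so $\MinCrit_d^N$ is the quotient of $\PGL_{N+1}$ by this action; an ample $h_{\mathsf{M}_d^N}$ restricted to $\MinCrit_d^N$ is then comparable to the infimum of the naive height $h(A)$ over the orbit, and we fix a reduced representative, so that $h(A)\asymp h_{\mathsf{M}_d^N}(f_A)$ and (via the adjugate) $h(A^{-1})\asymp h(A)$. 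The ramification divisor of $f_A$ is the fixed divisor $(d-1)\sum_{i=0}^{N}H_i$, whence, since Zhang's normalized height of a cycle scales by $d$ under $f_A$-pushforward,
\[
\hat{h}_{\mathrm{crit}}(f_A)=\frac{1}{N+1}\sum_{i=0}^{N}\hat{h}_{f_A}(H_i),
\qquad
\hat{h}_{f_A}(H_i)=d^{-n}\,\hat{h}_{f_A}\!\big(f_A^{\,n}(H_i)\big)\quad(n\ge0).
\]
Comparing the canonical adelic metric on $\mathcal{O}_{\PP^N}(1)$ with the standard one, a telescoping estimate bounds the total distortion by $\gamma(f_A)\ll\tfrac{1}{d-1}\big(h(A)+h(A^{-1})\big)+O(1)\ll h_{\mathsf{M}_d^N}(f_A)+O(1)$; since each coordinate hyperplane has standard normalized height $0$ this already gives $\hat{h}_{f_A}(H_i)\le\gamma(f_A)$ and hence $\hat{h}_{\mathrm{crit}}(f_A)\ll h_{\mathsf{M}_d^N}(f_A)+O(1)$ for every $d$.

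For the reverse inequality I would apply the scaling identity with a bounded $n$ (one expects $n=2$ to suffice; $n=1$ is too coarse, since $f_A(H_i)=A(H_i)$ records only a configuration of $N+1$ hyperplanes, which carries no $\PGL_{N+1}$-modulus). As $\hat{h}_{f_A}\big(f_A^{\,n}(H_i)\big)$ differs from the standard normalized height $h^{\mathrm{std}}\big(f_A^{\,n}(H_i)\big)$ of that hypersurface by at most $\gamma(f_A)$, one gets
\[
\hat{h}_{\mathrm{crit}}(f_A)\ \ge\ \frac{1}{(N+1)\,d^{\,n}}\Big(\max_{0\le i\le N}h^{\mathrm{std}}\big(f_A^{\,n}(H_i)\big)-\gamma(f_A)\Big).
\]
The crucial geometric input is a rigidity statement: a bounded initial segment of the postcritical orbit of the coordinate hyperplanes determines the conjugacy class of $f_A$, in the quantitative form $h_{\mathsf{M}_d^N}(f_A)\ll c_{N,d}\max_i h^{\mathrm{std}}\big(f_A^{\,n}(H_i)\big)+O(1)$. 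In coordinates this is concrete — $f_A^{\,n}(H_i)$ is $A$ applied to the image of the hyperplane $A(H_i)$ under $\mathbf{X}\mapsto\mathbf{X}^d$, so its defining equation is an elimination (resultant) whose coefficients are explicit polynomials of controlled degree in the entries of $A$ — and the task is to show that this assignment is finite onto its image (no positive-dimensional fibres), with degrees tracked well enough to pin down $c_{N,d}$.

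Feeding the rigidity bound and $\gamma(f_A)\ll h_{\mathsf{M}_d^N}(f_A)/(d-1)$ into the last display yields an estimate of the shape $h_{\mathsf{M}_d^N}(f_A)\ll c_{N,d}\,d^{\,n}\,\hat{h}_{\mathrm{crit}}(f_A)+\tfrac{B_{N,d}}{d-1}\,h_{\mathsf{M}_d^N}(f_A)+O(1)$; once $d$ is large enough that $B_{N,d}/(d-1)<1$, the last term absorbs into the left and the theorem follows. The constant $B_{N,d}$ is governed by how an ample class on the $(N^2+N)$-dimensional variety $\MinCrit_d^N$ compares with the pullback of the hypersurface-parameter polarization, so it is essentially the dimension $N^2+N$ of moduli space that sets the threshold $d>N^2+N+1$. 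The main obstacle, accordingly, is the rigidity step together with its bookkeeping: establishing finiteness of $f_A\mapsto\big(f_A^{\,n}(H_i)\big)_i$ on all of $\MinCrit_d^N$ (not merely generically), and quantifying it precisely enough that the degree bound comes out to be $N^2+N+1$ rather than something larger. The remaining ingredients — the moduli model, the reduction of $\hat{h}_{\mathrm{crit}}$ to a sum over coordinate hyperplanes, and the distortion estimates — are routine once this is in hand.
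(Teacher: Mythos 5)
Your outper bound direction parallels the paper's (and indeed this direction is already known in general), but the lower bound — the only direction that actually needs $d>N^2+N+1$ — rests entirely on the ``rigidity'' estimate $h_{\mathsf{M}_d^N}(f_A)\ll c_{N,d}\max_i h^{\mathrm{std}}\bigl(f_A^{\,n}(H_i)\bigr)+O(1)$, which you yourself flag as the main obstacle and leave unproved. That gap is real, and your diagnosis of why you must take $n\geq 2$ is also wrong: the paper works entirely with $n=1$. It is true that the \emph{configuration} of $N+1$ hyperplanes $A(H_0),\dots,A(H_N)$ has no $\PGL_{N+1}$-modulus, but what controls $h(A)$ are the naive heights of the specific \emph{forms} $A_*X_i$ (i.e.\ the rows of $A^{-1}$), after one fixes a canonical representative of the $G$-orbit. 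Lemma~\ref{lem:ones} produces a normalized lift in which every row of $A^{-1}$ contains a $1$ (using up exactly the diagonal part of the action $A\mapsto D^{-1}AD^d$), and Lemma~\ref{lem:goodrep} shows that for such a lift
\[
\log\|A\|\ \le\ N\sum_{i=0}^N\log\|A_*X_i\|+\bigl(\text{$\det$- and minor-terms that cancel globally}\bigr)+O(1),
\]
which is precisely the $n=1$ rigidity you believed unavailable. Pursuing $n=2$ via an elimination/resultant argument would be substantially harder, and you have not sketched how it would produce the exact threshold.

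Given Lemma~\ref{lem:goodrep}, the paper's lower bound is then mechanical, and it is instructive to see exactly where the threshold comes from. One has $G_{F_A}(X_i)=\tfrac1d G_{F_A}(A_*X_i)$ from a single pushforward, and the one-sided distortion $G_{F_A}(\Phi)\ge\log\|\Phi\|-\tfrac{\deg\Phi}{d-1}\log\|A\|+O(1)$ applied to the $N+1$ degree-one forms $A_*X_i$ yields
\[
\sum_{i=0}^N G_{F_A}(A_*X_i)\ \ge\ \left(\frac1N-\frac{N+1}{d-1}\right)\log\|A\|+\bigl(\text{globally cancelling terms}\bigr)+O(1),
\]
so that $\hat{h}_{\mathrm{crit}}(f_A)\ge\tfrac{d-1}{d}\bigl(\tfrac1N-\tfrac{N+1}{d-1}\bigr)h_{\PGL_{N+1}}(A)-O(1)=\tfrac{d-(N^2+N+1)}{dN}\,h_{\PGL_{N+1}}(A)-O(1)$. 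The condition $d>N^2+N+1$ is the positivity of this coefficient: it balances the $1/N$ coming from the normalized-lift inequality against $(N+1)/(d-1)$ from applying the Green's-function distortion to $N+1$ linear forms, rather than arising from a comparison of ample classes on the $(N^2+N)$-dimensional moduli space (the numerical agreement with $\dim\MinCrit_d^N$ is suggestive but not the mechanism). Your framework of canonical versus standard metrics on $\mathcal{O}_{\PP^N}(1)$ is compatible with this and could be rewritten in these terms; what is missing is the normalized-lift input, which is the genuine content of the proof.
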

	
Theorem~\ref{th:main} follows fairly quickly from the following, more explicit statement. 
\begin{theorem}\label{th:exp}
For any minimally critical map $f:\PP^N\to\PP^N$ of degree $d>N^2+N+1$ defined over $\overline{\QQ}$, there is an $A\in \PGL_{N+1}(\overline{\QQ})$ such that $f_A$ as in~\eqref{eq:Aform} is conjugate to $f$ and
\[\frac{1}{N(N+1)}\hat{h}_{\mathrm{crit}}(f)-C_1\leq h_{\PGL_{N+1}}(A)\leq\left(\frac{dN}{d-(N^2+N+1)}\right)\hat{h}_{\mathrm{crit}}(f) +C_2\]
for some explicit constants $C_1$ and $C_2$ depending just on $N$ and $d$.	
\end{theorem}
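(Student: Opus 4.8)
The plan is to reduce $f$ to the explicit model $f_A$, then to prove the two inequalities separately; the second is where the real work lies, and where the hypothesis $d>N^2+N+1$ is used.

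First I would put $f$ in normal form. Its critical locus is a union of $N+1$ properly‑meeting hyperplanes, so a linear change of coordinates moves these to the coordinate hyperplanes $H_i=\{X_i=0\}$, after which $f$ takes the shape \eqref{eq:Aform}, i.e.\ $f=f_A=A\circ g$ with $g(\mathbf{X})=\mathbf{X}^d=(X_0^d,\dots,X_N^d)$. Since $A$ is unramified, $\mathrm{Ram}(f_A)=(d-1)\sum_{i=0}^N H_i$, and $\hat h_{\mathrm{crit}}(f_A)=\hat h_{f_A}(\mathrm{Ram}(f_A))$ is the limit of suitably normalized heights of the iterated pullbacks $(f_A^n)^*\mathrm{Ram}(f_A)=(d-1)\sum_i\{\phi_{i,n}=0\}$, where $\phi_{i,n}$ is the $i$-th coordinate form of $f_A^n$. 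The crucial extra structure is that $A$ is determined by $f$ only up to $A\mapsto M_1AM_2$ for monomial (generalized permutation) matrices $M_1,M_2$, these being exactly the linear substitutions commuting with $g$ up to another such; for the hard direction I would pass to a representative of near‑minimal height.

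The lower bound for $h_{\PGL_{N+1}}(A)$ is the routine half. From the elementary composition estimates $h(P\circ A)\le h(P)+(\deg P)\,h_{\PGL_{N+1}}(A)+O_N(1)$ for a form $P$, and $h(P\circ g)=h(P)+O_{N,\deg P}(1)$ (since $g$ merely relabels monomials), the factorization $f_A=A\circ g$ gives a recursion that telescopes to $h(\phi_{i,n})\le\tfrac{d^n-1}{d-1}\,h_{\PGL_{N+1}}(A)+O_{N,d}(n)$; dividing by $d^n$, letting $n\to\infty$, and summing the contributions of the $N+1$ hyperplanes yields $\hat h_{\mathrm{crit}}(f_A)\le N(N+1)\,h_{\PGL_{N+1}}(A)+C_1'$, equivalent to the asserted lower bound. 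No restriction on $d$ enters here, and the constant $1/(N(N+1))$ is not sharp.

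For the upper bound I would fix a representative $A$ whose height is within $O_{N,d}(1)$ of $\inf h_{\PGL_{N+1}}(M_1AM_2)$ over monomial $M_1,M_2$. A reduction‑theory argument then bounds $h_{\PGL_{N+1}}(A)$ by a bounded multiple, depending only on $N$, of the naive heights of the hyperplanes occurring at the first stage of the critical orbit: for instance $(f_A)^*H_i=\{(\mathrm{row}_iA)\cdot\mathbf{X}^d=0\}$ has defining form of height $h(\mathrm{row}_iA)+O(1)$, and the collection $\{(f_A)^*H_i\}_i$ determines $A$, up to the monomial ambiguity and a factor $N$. Next I would compare these naive heights to $\hat h_{\mathrm{crit}}(f_A)$: by the functional equation the canonical height of the once‑pulled‑back ramification divisor is $\hat h_{f_A}(f_A^*\mathrm{Ram}(f_A))=d\,\hat h_{\mathrm{crit}}(f_A)$, and expanding the left side by telescoping recovers $(d-1)\sum_i h(\mathrm{row}_iA)$ from $\hat h_{\mathrm{crit}}(f_A)$ up to an error. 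Using $f_A=A\circ g$ and the matching lower estimate $h(P\circ A)\ge h(P)-(\deg P)\,h(A^{-1})-O(1)\ge h(P)-N(\deg P)\,h_{\PGL_{N+1}}(A)-O(1)$ together with $h(A^{-1})\le N\,h_{\PGL_{N+1}}(A)+O(1)$, this error is bounded by a geometric‑series‑type quantity whose aggregate coefficient on $h_{\PGL_{N+1}}(A)$ is $\tfrac{N(N+1)}{d-1}$. Feeding the reduction‑theory comparison back in gives $\bigl(1-\tfrac{N(N+1)}{d-1}\bigr)h_{\PGL_{N+1}}(A)\le c\,d\,\hat h_{\mathrm{crit}}(f_A)+C_2'$ with $c$ depending only on $N$; since $d>N^2+N+1$ the coefficient $1-\tfrac{N(N+1)}{d-1}$ is positive, and rearranging yields the stated bound with constant $\tfrac{dN}{d-(N^2+N+1)}$ after bookkeeping of the factors of $N$ introduced by passing between $A$ and $A^{-1}$ and by the reduction step. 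The main obstacle will be making this error estimate uniform in $A$; I expect the cleanest route is to argue place‑by‑place, bounding the local Green's functions of $f_A$ in terms of $\log\|A\|_v$ and $\log\|A^{-1}\|_v$, and the hypothesis $d>N^2+N+1$ is precisely the inequality $d-1>N^2+N$ that keeps $1-\tfrac{N(N+1)}{d-1}$ positive.
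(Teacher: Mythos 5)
Your overall plan — normal form $f_A=A\circ g$, local estimates, a place‑by‑place argument summed by the product formula — does match the paper's structure, and your ``easy'' direction is essentially the paper's Lemma~\ref{lem:greens}\eqref{it:gupp} in disguise. But the hard direction as you have sketched it does not work, for two related reasons.

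First, you propose to bound $G_{F_A}(X_i)$ from below by working with the \emph{pullback} $f_A^*H_i$, whose defining form $\mathrm{row}_i(A)\cdot\mathbf{X}^d$ has degree $d$. The Green's‑function lower bound (Lemma~\ref{lem:greens}\eqref{it:glow}) has an error term proportional to $\tfrac{\deg\Phi}{d-1}\log\|A\|_v$, so a degree‑$d$ form contributes an error coefficient $\tfrac{d}{d-1}\geq 1$, and after summing over the $N+1$ hyperplanes the coefficient of $\log\|A\|_v$ in your lower bound is $1-\tfrac{d(N+1)}{d-1}=\tfrac{-dN-1}{d-1}<0$ for \emph{every} $d$. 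No choice of $d$ rescues this. The paper avoids this by routing through the \emph{pushforward}: since ${F_A}_*X_i=(A_*X_i)^{d^N}$, one has $G_{F_A}(X_i)=\tfrac{1}{d}G_{F_A}(A_*X_i)$, where $A_*X_i=\mathrm{row}_i(A^{-1})\cdot\mathbf{X}$ is a \emph{linear} form. Degree $1$ is what keeps the error coefficient at $\tfrac{N+1}{d-1}$, and the extra factor $\tfrac{1}{d}$ together with the reduction‑theory factor $\tfrac{1}{N}$ is exactly how $\tfrac{d-(N^2+N+1)}{dN}$ emerges. This pushforward trick is the missing idea; nothing in your sketch replaces it.

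Second, the ancillary steps are also off. The identity $\hat h_{f_A}(f_A^*\mathrm{Ram}(f_A))=d\,\hat h_{\mathrm{crit}}(f_A)$ is false for the height used in the paper: from $\hat h_f(f_*D)=d^N\hat h_f(D)$ and $f_*f^*D=d^N D$ one gets $\hat h_f(f^*D)=\hat h_f(D)$, not $d\hat h_f(D)$. You appear to be conflating Zhang's height of divisors with a differently‑normalized ``pullback height'' satisfying $h'(f^*D)=d\,h'(D)$; the iterated‑pullback $\lim d^{-n}h(f_A^{*n}D)$ that you describe computes the latter, not the former. Likewise, the $G$‑action on $\PGL_{N+1}$ is $D\cdot A=D^{-1}AD^d$ (twisted conjugation) for diagonal $D$, not $A\mapsto M_1AM_2$ for arbitrary monomial $M_1,M_2$, and the ``representative of near‑minimal height'' is not the right normalization. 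The paper's Lemmas~\ref{lem:ones} and~\ref{lem:goodrep} produce a representative with every row of $A^{-1}$ containing a $1$, which is precisely what makes the local inequality $\log\|A^{-1}\|_v\leq\sum_i\log\|\mathrm{row}_i(A^{-1})\|_v$ hold at \emph{every} place (each summand is $\geq 0$) — a near‑height‑minimal representative has no reason to satisfy this, and the ``reduction theory'' step as stated has no content. Together these gaps mean the proposal, while pointing at the right target, does not yield a proof.
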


A morphism $f:\PP^N\to\PP^N$ is \emph{post-critically finite (PCF)} if and only if the subset
\[\bigcup_{k\geq 1}f^k(C_f)\subseteq \PP^N\]
is algebraic, where $C_f$ is the ramification divisor of $f$. Equivalently, $f$ is PCF if and only if there exist $k\geq 0$ and $m\geq 1$ so that the support of $f^{k+m}(C_f)$ is contained in that of $f^k(C_f)$. The critical height vanishes on PCF maps, and so results such as Theorem~\ref{th:main} allow us to say something about these examples.  Note, in fact, that most of the known examples of PCF endomorphisms of $\PP^N$ which do not in some way derive either from algebraic groups or from endomorphisms of $\PP^1$ are of the form~\eqref{eq:Aform}. Indeed, in the case $N=d=2$, the matrices
\[A=\begin{pmatrix}
 1 & -2 & 0 \\ 1 & 0 & 0 \\ 1 & 0 & -2	
 \end{pmatrix}, \begin{pmatrix}
 1 & -2 & 0 \\ 1 & 0 & -2	\\  1 & 0 & 0 
 \end{pmatrix}, \begin{pmatrix}
 1 & -1 & 1 \\ 1 & 1 & -1	\\  -1 & 1 & 1 
 \end{pmatrix},  \begin{pmatrix}
 1 & -1 & 0 \\ 1 & 0 & -1	\\  1 & 0 & 0 
 \end{pmatrix}
\]
all define PCF maps, the first two being examples due to Forn\ae ss and Sibony~\cite{fs}, the next to Dupont~\cite{dupont} (a Latt\`{e}s example, along with two more obtained by permuting rows in this matrix), and the last studied by Belk and Koch~\cite{bk}.

So far we have discussed results over number fields, but the machinery which proves Theorem~\ref{th:main} works in most function fields as well.
One consequence  is a rigidity result for PCF maps of the form~\eqref{eq:Aform}. Recall that an algebraic family of endomorphisms of $\PP^N$ is one whose coefficients lie in the function field of some algebraic variety, and an \emph{isotrivial} family is one whose coefficients are constant, maybe after a finite extension of the function field and a change of variables. It is a result of Thurston~\cite[Theorem~6.11, p.~93]{barbados} that any algebraic family of PCF maps on $\PP^1_{\CC}$ is either isotrivial, or a family of Latt\`{e}s examples. Little is known in this direction for endomorphism of $\PP^N$, with $N>1$, but our next theorem addresses this for minimally critical maps of sufficiently high degree. Similar results for other families have previously been established by Gauthier and Vigny~\cite{gv}.
\begin{theorem}\label{th:rigid}
Let $k$ be an algebraically closed field of characteristic either 0 or  $p>d$, and let $d>N^2+N+1$. Then any algebraic family of degree-$d$ minimally critical PCF maps defined over $k$ is isotrivial.
\end{theorem}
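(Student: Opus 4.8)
The plan is to deduce Theorem~\ref{th:rigid} from the height machinery behind Theorems~\ref{th:main} and~\ref{th:exp}, transported to the function field setting. Let me set up the situation: an algebraic family of degree-$d$ minimally critical PCF maps over $k$ amounts to a point $f\in\operatorname{MinCrit}_d^N(K)$, where $K=k(V)$ is the function field of some variety $V/k$, such that every specialization (equivalently, by spreading out, $f$ itself, viewed over $\overline{K}$) is PCF. Since $f$ is minimally critical, Theorem~\ref{th:exp} gives us a matrix $A\in\PGL_{N+1}(\overline{K})$ with $f_A$ conjugate to $f$; after a finite extension of $K$, which only replaces $V$ by a cover and does not affect isotriviality, we may assume $A\in\PGL_{N+1}(K)$. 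The key point is that the function-field analogues of the height inequalities in Theorem~\ref{th:exp} should read
\[
\tfrac{1}{N(N+1)}\hat{h}_{\mathrm{crit}}(f_A) - C_1 \leq h_{\PGL_{N+1}}(A) \leq \left(\tfrac{dN}{d-(N^2+N+1)}\right)\hat{h}_{\mathrm{crit}}(f_A) + C_2,
\]
with $h_{\PGL_{N+1}}$ now the (geometric, i.e. function-field) Weil height relative to $K/k$ and $\hat h_{\mathrm{crit}}$ the corresponding canonical height of the ramification divisor. This is exactly where the hypothesis on the characteristic ($0$ or $p>d$) enters: one needs the arguments proving Theorem~\ref{th:exp} — which presumably involve resultants, the structure of iterated critical divisors, and elimination-theoretic bounds — to remain valid, and $p>d$ guarantees that $f_A$ is separable and that the combinatorics of the ramification divisor (degree $(d-1)(N+1)$, supported on $N+1$ properly-meeting hyperplanes) behave as in characteristic $0$.

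Next I would use the PCF hypothesis to force $\hat h_{\mathrm{crit}}(f)=0$. By definition $f$ is PCF iff $\bigcup_{k\geq1}f^k(C_f)$ is algebraic, equivalently the forward orbit of the ramification divisor is finite up to support; in either formulation the canonical height (à la Zhang) of the ramification cycle vanishes, since the relevant cycle is preperiodic under the induced action of $f$ on cycles/divisors and Zhang's canonical height of a preperiodic subvariety is $0$. Hence $\hat h_{\mathrm{crit}}(f)=0$. Feeding this into the upper bound from the transported Theorem~\ref{th:exp} yields $h_{\PGL_{N+1}}(A)\leq C_2$, a bound depending only on $N$ and $d$ — in particular independent of the family.

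Finally I would convert this uniform height bound into isotriviality. The Weil height $h_{\PGL_{N+1}}$ on $\PGL_{N+1}$ over the function field $K=k(V)$ is the height attached to an ample divisor, so the set of points of bounded height is not finite in general (the ground field $k$ is infinite), but it is a \emph{bounded} subset: concretely, $h_{\PGL_{N+1}}(A)\leq C_2$ means that, after clearing denominators, the entries of $A$ (and of $A^{-1}$, or equivalently the adjugate, packaging $A$ as a point of the appropriate projective space) are regular functions on a fixed model of $V$ of degree $\leq C_2$ against a fixed ample class. The space of such tuples of bounded-degree sections is finite-dimensional over $k$. This is precisely the mechanism by which a function-field point of bounded height lies in a finite-dimensional, hence finite-type, $k$-subscheme of the parameter space; pulling back, the family $A$ corresponds to a morphism $V\to (\text{a finite-type }k\text{-scheme of matrices of bounded height})$, and the fibers of the universal object over that base are constant in the $V$-direction. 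More cleanly: bounded height over $k(V)$ with $k$ algebraically closed implies the point is defined over $\overline{k}=k$ up to the ambiguity of $\PGL_{N+1}(K)/\PGL_{N+1}(k)$ acting by conjugation — which is exactly the statement that $f$, after conjugation by an element of $\PGL_{N+1}(K)$, has constant coefficients, i.e. the family is isotrivial.

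The main obstacle, and the step deserving the most care, is the second one made precise: from $h_{\PGL_{N+1}}(A)\leq C_2$ to \emph{isotriviality of $f$} rather than merely of $A$. One must be careful that the normal form $f_A$ produced by Theorem~\ref{th:exp} is canonical enough that a bound on $h(A)$ controls $f$ up to $k$-conjugacy and not just up to $K$-conjugacy; equivalently, one needs that the conjugation bringing $f$ to $f_A$-form, composed with whatever residual $\PGL_{N+1}(K)$-ambiguity remains, can be absorbed. I expect this is handled by noting that the residual ambiguity stabilizing the shape $A\mathbf X^d$ is itself an algebraic subgroup defined over the prime field (diagonal and permutation matrices, roughly, since conjugating $A\mathbf X^d$ to $B\mathbf X^d$ forces $B = D^{-1} A D^{d}$-type relations pinning $D$ to a bounded group), so boundedness of $h(A)$ plus boundedness of this group gives a finite-type $k$-scheme of representatives, and a morphism from the irreducible $V$ to a finite-type $k$-scheme with the property that the induced dynamical system is constant on it must itself be constant after base change — i.e., isotriviality. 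A secondary technical point is verifying that Zhang's canonical height genuinely vanishes on the PCF locus in the function-field case; this should follow from the same telescoping/limit argument as over number fields, using that the characteristic assumption keeps all the intersection-theoretic degrees in the definition of $\hat h_{\mathrm{crit}}$ well-behaved.
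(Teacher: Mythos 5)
Your plan follows the paper's general strategy (transport the height machinery to the function field $K=k(V)$, exploit vanishing of the critical height on PCF maps, extract information about $A$), but the passage from a \emph{bounded} height to isotriviality is where a genuine gap appears, and the paper avoids the problem by an observation you did not make. Over a function field the constants $C_1,C_2$ of Theorem~\ref{th:exp} are sums of quantities like $h(2)$, $h(N!)$, $h((N+1)!)$, $h(d)$ — heights of nonzero elements of the prime field — and these all vanish, because such elements are constants in $K=k(V)$. The paper notes this explicitly (``the $O_{d,N}(1)$ term involves only heights of integers, and so vanishes in the function field setting''). Consequently, after summing the local inequality of Lemma~\ref{lem:lyapbound} over the $D$-adic places of $k(V)$ one gets the \emph{exact} two-sided bound
\[\left(\frac{d-(N^2+N+1)}{dN}\right)h_{\PGL_{N+1}}(A)\;\leq\;\hat h_{\mathrm{crit}}(f_A)\;\leq\;N(N+1)\,h_{\PGL_{N+1}}(A),\]
with no additive error. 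PCF then forces $h_{\PGL_{N+1}}(A)=0$, not merely $h_{\PGL_{N+1}}(A)\leq C_2$, and a rational function on a normal projective $V$ of height $0$ has no poles, hence is constant; so $A\in\PGL_{N+1}(k)$ and the family is isotrivial.

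Your attempted bridge from $h_{\PGL_{N+1}}(A)\leq C_2$ to isotriviality does not work as stated. Over a function field, bounded height of a point does not imply the point is defined over $k$: a rational function with a single simple pole has height $1$ but is nonconstant. Packaging the entries of $A$ as bounded-degree sections of a line bundle yields a finite-dimensional $k$-vector space of possibilities, but a nonconstant morphism $V\to\PP^n$ can perfectly well land in such a space; finite-dimensionality of the parameter space does not force the map $V\to\PGL_{N+1}$ to be constant. So the argument in your third paragraph (``bounded height over $k(V)$ with $k$ algebraically closed implies the point is defined over $k$ up to...'') is false, and the repair you sketch in the fourth paragraph addresses a different concern (canonicity of the normal form) rather than this one. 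The fix is simply to observe that $C_2=0$ in the function field setting and deduce $h(A)=0$, which is the route the paper takes. Your worry about residual conjugation ambiguity is legitimate but is handled in the paper at an earlier stage (one changes variables so that $f=f_A$ with $A\in\PGL_{N+1}(K)$ before applying the height bound), and once $A$ itself is constant the family is isotrivial by definition.
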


One can use essentially the same argument as that which proves Theorem~\ref{th:rigid} to prove that any minimally critical PCF map of degree $d>N^2+N+1$ defined over $\CC$ is conjugate to one of the form~\eqref{eq:Aform} for some 
$A\in\PGL_{N+1}(\overline{\QQ})$. In other words, considering the PCF examples with algebraic coefficients is considering all of them.

Theorem~\ref{th:exp} has the following finiteness result as a consequence, analogous to one of the main results of~\cite{bijl}. 
 \begin{corollary}\label{th:heightbound}
For each $D\geq 1$ and $d>N^2+N+1$, there is a finite and effectively computable set $S_D\subseteq \PGL_{N+1}(\overline{\QQ})$ such that every minimally critical PCF map $f$ of degree $d$ defined over a number field of degree at most $D$  is conjugate to $A\mathbf{X}^d$ for some $A\in S_D$.
\end{corollary}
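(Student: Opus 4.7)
The plan is to combine the upper bound in Theorem~\ref{th:exp} with an effective form of Northcott's theorem. Since $f$ is PCF, its ramification divisor is preperiodic, so $\hat{h}_{\mathrm{crit}}(f)=0$. The upper bound of Theorem~\ref{th:exp} then yields some $A\in\PGL_{N+1}(\overline{\QQ})$ with $f\cong f_A$ and
\[h_{\PGL_{N+1}}(A)\leq C_2,\]
for the explicit constant $C_2=C_2(N,d)$ appearing there.

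Next I would bound $[\QQ(A):\QQ]$ in terms of $D$. Let $K$ be the field of definition of $f$, so that $[K:\QQ]\leq D$. The $N+1$ hyperplanes in the critical locus of $f$ are permuted as a set by $\mathrm{Gal}(\overline{K}/K)$, so each is individually defined over an extension $K'$ of $K$ of degree at most $(N+1)!$. A projective-linear change of coordinates over $K'$ sending these hyperplanes to the coordinate hyperplanes puts $f$ in the form $f_A$, with $A$ determined up to the torus action $A\mapsto DAD^{-d}$ and a permutation of coordinates. Both of these residual ambiguities can be removed by a Galois-equivariant normalization: one first fixes the labelling of the critical hyperplanes by some canonical combinatorial rule, then rescales $A$ so that a specified non-vanishing column equals $(1,\ldots,1)^\top$. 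Since this only requires solving linear equations, one obtains a representative with $[\QQ(A):\QQ]\leq e(N,d)\cdot D$ for an explicit $e(N,d)$.

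Applying the effective form of Northcott's theorem to the algebraic points of $\PGL_{N+1}$ with $h_{\PGL_{N+1}}(A)\leq C_2$ and $[\QQ(A):\QQ]\leq e(N,d)D$ then produces a finite, effectively computable set $S_D$, as required. The main delicate point is the middle step: the torus freedom is removable by a single rescaling, but one must stratify by which columns of $A$ have zero entries in order to handle degenerate cases in a Galois-equivariant way. Once this bookkeeping is done, the rest of the argument is a routine consequence of Theorem~\ref{th:exp} together with standard effectivity for the Weil height machine.
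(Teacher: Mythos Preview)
Your overall strategy---set $\hat{h}_{\mathrm{crit}}(f)=0$, invoke the upper bound in Theorem~\ref{th:exp} to bound $h_{\PGL_{N+1}}(A)$, bound the degree of the field of definition of $A$, then apply Northcott---is exactly what the paper does. The gap is in the middle step, and it is essentially the one you yourself flag as ``delicate.''

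Theorem~\ref{th:exp} produces a \emph{particular} representative $A$ satisfying the height bound; in the paper this is the normalized lift of Lemma~\ref{lem:ones}, characterized by the condition that every row of $A^{-1}$ contains a~$1$. Your degree argument, by contrast, builds a \emph{different} representative: you move the critical hyperplanes to the coordinate hyperplanes over $K'$, and then rescale so that some column of $A$ equals $(1,\dots,1)^\top$. Nothing guarantees that this column-normalized matrix coincides with the normalized lift that carries the height bound, and since the $G$-orbit is positive-dimensional (the diagonal torus acts), two representatives in the same orbit can have very different heights. So as written you have one $A$ with bounded height and another $A'$ with bounded degree, but not a single matrix with both. (Your column normalization also fails outright when no column of $A$ is zero-free---e.g.\ for the power map---and the promised ``stratification'' would have to reproduce something like the cycle analysis in Lemma~\ref{lem:ones} anyway.)

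The paper closes this gap by returning to the actual construction of the normalized lift: Lemma~\ref{lem:ones} picks a function $\sigma$ on $\{0,\dots,N\}$ with $B_{i,\sigma(i)}\neq 0$ (guaranteed since $B=A^{-1}$ is invertible) and solves for the diagonal $D$ by extracting a $(d^m-1)$th root on each $m$-cycle of $\sigma$, then a $d$th root at each strictly preperiodic index. Summing over cycles gives $[L:K]\le d^{N+1}$, so the very $A$ furnished by Theorem~\ref{th:exp} already lives over an extension of $K$ of degree at most $d^{N+1}$. That is the missing computation in your sketch.
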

Note that the only non-standard step in concluding Corollary~\ref{th:heightbound} from  Theorem~\ref{th:exp} is to confirm that the morphism $f_A$ mentioned therein is defined over an extension of the field of definition of $f$ of bounded degree.

In Section~\ref{sec:examples}, we write down examples of minimally critical PCF maps in every degree and dimension. In Section~\ref{sec:normal} we describe something of a normal form for minimally critical maps, working over an algebraically closed field (or even separably closed if $d$ is not divisible by the characteristic). In Sections~\ref{sec:local} and~\ref{sec:lyap} we work over local fields, providing some estimates on local heights and Lyapunov exponents, and in Section~\ref{sec:global} we assemble these bounds to prove the main results.


\section{Examples}\label{sec:examples}

Before continuing, we exhibit minimally critical PCF endomorphisms of $\PP^N$ for all $N$ and of every degree. These examples are a straightforward generalization of those due to Forn\ae ss and Sibony, and Belk and Koch, presented in the introduction. To shed some light on the combinatorics of these examples, we compute the least $k$ and $m$ so that $f^{k+m}(H_i)=f^k(H_i)$ for each coordinate hyperplane $H_i$.
In general, we say that a divisor $D$ on $\PP^N$ has \emph{type} $(k, m)$ relative to $f$ if and only if
\[f^{k+m}(\Gamma)\subseteq f^k(\Gamma)\]
for every irreducible component of $\Gamma$ of the support of $D$, and if $k\geq 0$ and $m\geq 1$ are the smallest such values. We will say that $f$ has \emph{critical type} $(k, m)$ if $C_f$ has type $(k, m)$ relative to $f$. Note that this is different from saying that $k$ and $m$ are minimal with $f^{k+m}(C_f)\subseteq f^k(C_f)$, since different components of $C_f$ might be in the same orbit.   

 In general it is possible to ``fake-up'' PCF examples in several variables from PCF endomorphisms of $\PP^1$, which are in good supply. For instance, if $f$ is a PCF rational function of one variable, then the diagonal action of $f$ is an endomorphism of $(\PP^1)^N$ which commutes with coordinate permutations, and hence induces an endomorphism of $(\PP^1)^N/S_N\cong \PP^N$. This endomorphism will be PCF and have degree $\deg(f)$ (see, e.g., \cite{ds}).
 
 Specifically, for $P=[\alpha:\beta]\in \PP^1$, define a hyperplane $H_P$ in $\PP^N$ by
 \[H_P:\sum_{i=0}^N(-1)^i\alpha^i\beta^{N-i}X_i=0.\]
If $f:\PP^1\to \PP^1$, and $F:\PP^N\to\PP^N$ is the $N$th symmetric power of $f$, then it is straightforward to check that $F(H_P)=H_{f(P)}$. If $\Delta$ is the divisor on $\PP^N$ over which two coordinates of $(\PP^1)^N$ coincide, which is irreducible of degree $2N-2$, then $F^*\Delta=\Delta+2E$, where $E$ is the image of the locus in $(\PP^1)^N$ on which two coordinates are distinct $f$-preimages of the same point. One calculates that the ramification divisor of $F$ is exactly
\[C_F=E + \sum (e_f(P)-1)H_P=E+H_{C_f},\]
if we extend the association $P\mapsto H_P$ linearly to divisors. Since $E\neq \Delta$, but $F(E)=\Delta=F(\Delta)$, we see that whenever $f$ has critical type $(k, m)$, the symmetric power $F$ has critical type $(\max\{1, k\}, m)$. The first type of example constructed below has periodic critical locus (that is, type $(0, m)$), and hence cannot be obtained in this way from an endomorphism of $\PP^1$. The second class of examples has preperiodic critical locus, but  the branch locus consists entirely of hyperplanes, and hence they also cannot be conjugate to symmetric powers (whose branch loci contain the irreducible hypersurface $\Delta$ of degree $2N-2$).

\subsection{Examples in the style of Belk and Koch}
Let $\sigma$ be any permutation of $\{0, ..., N\}$. We define a morphism $f=[f_0:\cdots :f_N]$ by
\[f_{\sigma(i)} = \begin{cases}
 	X_0^d & \text{if }i=0 \\
 	X_0^d-X_i^d & \text{if }i\neq 0.
 \end{cases}
\]
Now, let $H_i$ be the hyperplane defined by $X_i=0$ and $H_{i, j}=H_{j, i}$ by $X_i=X_j$. It is easy to check that
\[f(H_i)=\begin{cases}
H_{\sigma(0)} & \text{if }i=0\\
H_{\sigma(0), \sigma(i)}	 & \text{if }i\neq0,
\end{cases}
\]
while
\[f(H_{i, j})=\begin{cases}
H_{\sigma(j)} & \text{if }i=0\\
H_{\sigma(i), \sigma(j)}	 & \text{if }i, j\neq 0,
\end{cases}
\]
where here and throughout we use $f(D)$ to denote the image of $D$, which is the support of the push-forward of $D$.
In particular, this finite collection of hyperplanes is permuted in some fashion by $f$, and since it contains the support of the critical divisor (the coordinate hyperplanes), $f$ is PCF. The Bell-Koch example in the introduction corresponds to the cyclic permutation $(021)\in S_3$.

In order to formulate the next two results, we recall the \emph{Landau function} $g(N)$, defined as the largest order of an element of $S_N$, which satisfies
 \[g(N)=e^{(1+o(1))\sqrt{N\log N}}.\]

\begin{prop}
There exist minimally critical PCF endomorphisms of $\PP^N$ of every degree with critical type $(0, g(N+2))$, where $g$
is the Landau function.
\end{prop}

\begin{proof}
We compute the combinatorics of the orbit of each coordinate hyperplane for examples as constructed above. In general, let $\sigma\in S_{N+1}$ and let $0$ have period $m\neq 1$ under $\sigma$. First, note that
\[H_0\to H_{\sigma(0)}\to H_{\sigma^2(0), \sigma(0)}\to\cdots\to H_{0, \sigma^{m-1}(0)}\to H_0, \]
and so $H_0$ and $H_{\sigma(0)}$ have type $(0, m+1)$.

If $i=\sigma^k(0)$ with $k\neq 0, 1, \frac{m+1}{2}$ (including the case where $m$ is even), we have (under the action of $f$)
\begin{multline*}
H_{i}=H_{\sigma^k(0)}\to H_{\sigma(0), \sigma^{k+1}(0)}\to\cdots\to H_{\sigma^{m-k-1}(0), \sigma^{m-1}(0)} \to H_{0, \sigma^{m-k}(0)}\\ \to H_{\sigma^{m-k+1}(0)} \to H_{\sigma(0), \sigma^{m-k+2}(0)}\to\cdots \to H_{0, \sigma^{k-1}(0)}\to H_{\sigma^k(0)}=H_i,	
\end{multline*}
and so $H_i$ again has type $(0, m+1)$. Finally, if $k=\frac{m+1}{2}$ then the above still holds, but we have $m-k+1=k$, and so we have run through the cycle twice. This hyperplane has type $(0, \frac{m+1}{2})$.

Now suppose that $i\in \{0, ..., N\}$ is not in the orbit of $0$, and has period $k$ under $\sigma$. Then we have
\begin{multline*}
H_{i}\to H_{\sigma(0), \sigma(i)}\to\cdots \to H_{\sigma^{m-1}(0), \sigma^{m-1}(i)}\to H_{0, \sigma^m(i)}\\ \to H_{\sigma^{m+1}(i)} \to \cdots \to H_{\sigma^{2(m+1)}(i)}\to\cdots H_{\sigma^{j(m+1)}(i)}=H_i,
\end{multline*}
for the first time when $j=k/\gcd(k, m+1)$, so $H_i$ has type $(0, \operatorname{lcm}(k, m+1))$.

Define an element $\sigma'\in S_{N+2}$ with the same cycle structure of $\sigma$, but with the cycle containing $0$ increased by one element. Then if $\sigma'$ has order $\ell$, $f$ as constructed above has critical type $(0, \ell)$. We may now simply select $\sigma'\in S_{N+2}$ of maximal order, with the constraint that $0$ is in a cycle of length at least 3. 
\end{proof}

\subsection{Examples in the style of Forn\ae ss and Sibony}
Similarly, with $d\geq 2$ and $\zeta\neq 1$ a $d$th root of unity, define another morphism $f:\PP^N\to\PP^N$ by
\[f_{\sigma(i)} = \begin{cases}
 	X_0^d & \text{if }i=0 \\
 	X_0^d+(\zeta-1)X_i^d & \text{if }i\neq 0.
 \end{cases}
\]
Here, let $H_i$ and $H_{i, j}$ be as before, and let $H_{i, j}^{\pm}$ be defined by $X_i=\zeta^{\pm 1} X_j$, noting that $H_{i, j}^{\pm}=H_{j, i}^{\mp}$. We then check that
\[f(H_i)=\begin{cases}
H_{\sigma(0)} & \text{if }i=0\\
H_{\sigma(0), \sigma(i)}	 & \text{if }i\neq0,
\end{cases}\]
while
\[f(H_{i, j}^{\pm})=f(H_{i, j})=\begin{cases}
H_{\sigma(0), \sigma(j)}^{+} & \text{if }i=0\\
H_{\sigma(i), \sigma(0)}^{-} & \text{if }j=0\\
H_{\sigma(i), \sigma(j)}	 & \text{if }i, j\neq 0.
\end{cases}
\]
Again we have a finite collection of hyperplanes which is closed under the action of $f$, and contains the support of the ramification divisor, and so $f$ is PCF. Indeed, the first example is essentially the second with $\zeta=0$ (which is not a root of unity, of course). 

\begin{prop}
There exist minimally critical PCF endomorphisms of $\PP^N$ of every degree with critical type $(3, g(N+1))$.
\end{prop}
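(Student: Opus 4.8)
The plan is to mimic the proof of the previous proposition, computing the orbit structure of the coordinate hyperplanes (and the auxiliary hyperplanes $H_{i,j}$ and $H_{i,j}^\pm$) under $f$ directly from the action formulas displayed just above the statement. The new ingredient compared to the Belk--Koch style example is that the branch locus is no longer periodic but only preperiodic, and the ``delay'' before periodicity is exactly $3$ rather than $0$; so first I would pin down why the tail has length $3$. Tracking $H_0$: we have $H_0\to H_{\sigma(0)}\to H_{\sigma(0),\sigma^2(0)}\to\cdots$, and the point is that once we are inside the subsystem of $H_{i,j}$'s and $H_{i,j}^\pm$'s, everything is periodic (that subsystem is a finite set permuted by $f$), but to enter it from a bare coordinate hyperplane $H_i$ takes one step, and the first return of that orbit back through a bare hyperplane $H_{\sigma^\bullet(0)}$ and then back into the $H_{i,j}$ system accounts for the remaining steps. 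I would make this precise by the same case analysis as in the preceding proof: let $m$ be the period of $0$ under $\sigma$, follow
\[H_0\to H_{\sigma(0)}\to H_{\sigma(0),\sigma^2(0)}\to\cdots\to H_{\sigma^{m-1}(0),0}\to H_{\sigma^{m}(0)}=H_0\text{'s image chain},\]
being careful that $f(H_{i,j})$ with one index $0$ lands back on a bare $H_{\sigma(\cdot)}$, and read off that $H_0$ has type $(k,m')$ with $k$ at most $3$; similarly for $H_i$ with $i$ in the orbit of $0$ (here the two subcases $k\neq\frac{m+1}2$ and $k=\frac{m+1}2$ reappear, halving the period in the latter), and for $H_i$ with $i$ outside the orbit of $0$ (giving period $\operatorname{lcm}$ of the relevant cycle lengths, tail controlled the same way).

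Next, to get the precise critical type $(3,g(N+1))$, I would again introduce the auxiliary permutation: unlike the Belk--Koch case there is \emph{no} need to lengthen the cycle of $0$ (the ``$+1$'' there came from the extra hyperplane $H_{\sigma^{m-1}(0)}\to H_0$ step that closed the period up to $m+1$; here, because of the extra $H_{i,j}^\pm$ hyperplanes, the bookkeeping is different and one just gets the order of $\sigma$ itself on a slightly different set). So I would take $\sigma'\in S_{N+1}$ of maximal order — this maximal order is $g(N+1)$ by definition of the Landau function — with $0$ in a cycle of length at least $2$ (which costs nothing asymptotically and in fact nothing at all once $N+1\geq 3$, since an element of maximal order in $S_{N+1}$ can always be chosen not to fix a prescribed point when $N+1\geq 3$), and verify that the resulting $f$ has every coordinate hyperplane of type $(k_i,m_i)$ with $\max k_i=3$ and $\operatorname{lcm} m_i = \operatorname{ord}(\sigma')=g(N+1)$. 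The cleanest way to present the lcm computation is: each $m_i$ divides $\operatorname{ord}(\sigma)$, and the hyperplane $H_0$ (or some $H_i$) realizes a multiple generating the whole order, exactly as in the previous proof.

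The main obstacle I anticipate is purely combinatorial care rather than any conceptual difficulty: correctly chasing the orbit of $H_0$ and of the various $H_i$ through the three types of hyperplanes, making sure the formulas $f(H_{i,j}^\pm)=f(H_{i,j})$ (so the $\pm$ decoration is immediately forgotten) and the index-$0$ special cases are applied in the right spots, and double-checking the ``$3$'' — i.e., that the tail is genuinely length $3$ and not $2$ or $4$ — by exhibiting a coordinate hyperplane whose orbit takes exactly three steps to become periodic. A secondary, minor point is to confirm that for small $N$ (say $N=1$) the claimed type still makes sense and that a maximal-order $\sigma'$ with $0$ in a cycle of length $\geq 2$ exists; for $N=1$ this is the transposition in $S_2$, and one checks the statement by hand. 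Once the orbit combinatorics is laid out, assembling the final answer is immediate, exactly parallel to the Belk--Koch proposition.
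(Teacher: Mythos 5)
Your high-level plan matches the paper's: track the orbits of the coordinate hyperplanes using the displayed action formulas, conclude the tail length is $3$ and the eventual period is $\operatorname{ord}(\sigma)$, then take $\sigma$ of maximal order in $S_{N+1}$ with $0$ in a nontrivial cycle. You also correctly spot the essential structural differences from the Belk--Koch case: here the critical hyperplanes are strictly preperiodic rather than periodic, and there is no need to pass to $S_{N+2}$. However, the orbit chase you sketch for $H_0$ has a genuine error. Your displayed chain ends $\cdots\to H_{\sigma^{m-1}(0),0}\to H_{\sigma^m(0)}=H_0$, but in the Forn\ae ss--Sibony construction the formula $f(H_{i,j})$ with $j=0$ gives $H^-_{\sigma(i),\sigma(0)}$, so the next term is $H^-_{0,\sigma(0)}$ --- a decorated hyperplane of the $H^\pm$ type --- not the bare coordinate hyperplane $H_0$. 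Once the orbit enters the $H_{i,j}$/$H^\pm_{i,j}$ system it never returns to a bare $H_i$; the ``return through a bare hyperplane $H_{\sigma^\bullet(0)}$'' you describe simply does not occur here (it does occur in the Belk--Koch case, where $f(H_{0,j})=H_{\sigma(j)}$, which is precisely what makes the Belk--Koch critical type $(0,\cdot)$ rather than $(3,\cdot)$). For the same reason the $(m+1)/2$ period-halving subcase you import from the previous proof does not reappear: that phenomenon was caused by the orbit re-entering the bare-hyperplane layer, which never happens in this family.

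The paper instead unifies $H_{i,j}$ and $H^\pm_{i,j}$ into a single object $L_{i,j}$ (equal to $H^+_{i,j}$ when $i=\sigma(0)$, to $H_{i,j}$ otherwise), observes that $f(L_{i,j})=L_{\sigma(i),\sigma(j)}$, and then the periodic part is simply the $\sigma$-action on unordered pairs, giving $L_{i,j}$ period $\operatorname{lcm}(e_i,e_j)$ with no halving. The tail computation then hinges on noting that $f(H_i)=H_{\sigma(0),\sigma(i)}$ is \emph{not} of the form $L_{j,k}$ (wrong decoration) while $f^2(H_i)=L_{\sigma^2(0),\sigma^2(i)}$ is, so each $H_i$ with $i\neq 0$ has tail exactly $2$; and since $f(H_0)=H_{\sigma(0)}$ is itself a bare coordinate hyperplane of tail $2$, the hyperplane $H_0$ alone has tail $3$ (you suggest ``every coordinate hyperplane'' should have tail $3$, which is not what happens). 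You gesture at the periodicity of the $H_{i,j}$ system but never isolate the $L$-bookkeeping, and without it the tail-length argument does not go through. The remaining pieces of your proposal --- choosing $\sigma$ of maximal order with $0$ not fixed, and the observation that $\operatorname{ord}(\sigma)=g(N+1)$ --- are correct and match the paper.
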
 

\begin{proof}
	We employ the construction immediately preceding the proposition. Let $\sigma\in S_{N+1}$, and suppose that $0$ has order $m\neq 1$ under $\sigma$. Let 
	\[L_{i, j} =L_{j, i}=\begin{cases}
H_{i, j} & \text{if }i, j\neq \sigma(0)\\
H^+_{i, j}=H^{-}_{j, i} & \text{if }i=\sigma(0).
\end{cases}
\]
Then from the above, we see that \[f(L_{i, j})=f(H_{i, j})=L_{\sigma(i), \sigma(j)},\] for any $i, j$, while $L_{i, j}=L_{i', j'}$ implies $\{i, j\}=\{i', j'\}$. If $e_i$ is the period of $i$ under $\sigma$, we therefore have $L_{i, j}$ of period $\operatorname{lcm}(e_i, e_j)$.

For $i\neq 0$, we have $f^m(H_i)=L_{0, i}$, and so $H_i$ has eventual period that of $L_{0, i}$. But $H_i\neq L_{j,k}$ for any $j, k$, and similarly $f(H_i)=H_{\sigma(0), \sigma(i)}$ is not of the form $L_{j, k}$. Since \[f^2(H_i)=f(H_{\sigma(0), \sigma(k)})=L_{\sigma^2(0), \sigma^2(i)},\] we have $H_i$ of tail length exactly 2.

Similarly, $f^m(H_0)=L_{0,\sigma^{m-1}(0)}$, and so $H_0$ has eventual period equal to that of $L_{0,\sigma^{m-1}(0)}$. On the other hand, since $\sigma(0)\neq 0$ we have just computed that $H_{\sigma(0)}$ has tail length $2$, and so $H_0$ has tail length 3.

 Thus the type of $C_f=\sum H_i$ will be \[(\max\{2, 3\}, \operatorname{lcm}(e_0, e_1, ..., e_N)),\] and note that the second coordinate is the order of $\sigma$. The final claim follows by taking $\sigma$ to be any element of $S_{N+1}$ of maximal order (see, e.g., \cite{miller}).
\end{proof}

Note that Dupont~\cite{dupont} shows that one of the examples of Forn\ae ss and Sibony is not a Latt\`{e}s example. It would be interesting to know whether it is possible to apply the same sort of analysis to show that some subset of the examples constructed above are non-Latt\`{e}s.


\section{Normalized representatives}\label{sec:normal}

Let $k$ be an algebraically closed field of characteristic $0$ or $p$, and fix $d\geq 2$ not divisible by $p$. We will briefly describe the space of minimally critical endomorphisms over $k$.

As in~\cite[Chapter~1]{barbados}, let $\Hom_d^N$ be the space of endomorphisms of $\PP^N$ of degree $d$, parametrized by their coefficients. Viewing the coefficients as a projective $M$-tuple, with $M=(\binom{N+d}{d}(N+1)-1)$, we see that $\Hom_d^N\subseteq \PP^M$ is the complement of the vanishing of the Macaulay resultant. The group $\PGL_{N+1}$ of automorphisms of $\PP^N$ acts on $\Hom_d^N$ by conjugation, $f^B=B^{-1}fB$, and the quotient of $\Hom_d^N$ by this action is an affine variety $\mathsf{M}_d^N$ \cite[Theorem~2.24, p.~21]{barbados}.

The association of the map $\mathbf{X}\mapsto A\mathbf{X}^d$ to the matrix $A$ gives a morphism $\PGL_{N+1}\to \Hom_d^N$,  identifying $\PGL_{N+1}$ with an intersection of  coordinate hyperplanes. To see that this is a morphism, note that the extension to the ambient projective space $\PP^{(N+1)^2-1}\supseteq \PGL_{N+1}$ gives an embedding of projective spaces, while for any choice of representative $A\in\GL_{N+1}$ of a point in $\PGL_{N+1}$, we have
\[\Res(A\mathbf{X}^d)=\det(A)^{d^{N}},\]
by \cite[Theorem~3.13, p.~399]{lang} and  \cite[Corollary~3.14, p.~400]{lang}, so this embedding identifies the complement of $\PGL_{N+1}$ with the complement of the resultant locus in its image.

We write $\MinCrit_d^N\subseteq \mathsf{M}_d^N$ for the collection of conjugacy classes of morphisms ramified along $N+1$ hyperplanes.
\begin{lemma}\label{lem:mincritdes}
The locus $\MinCrit_d^N\subseteq \mathsf{M}_d^N$ is precisely the image of $\PGL_{N+1}$ under the composition of the above-described map $\PGL_{N+1}\to \Hom_d^N$ with the quotient map $\Hom_d^N\to \mathsf{M}_d^N$.
\end{lemma}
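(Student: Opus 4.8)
The plan is to prove the two inclusions separately. Note first that the composition $\PGL_{N+1}\to\Hom_d^N\to\mathsf{M}_d^N$ sends $A$ to the conjugacy class $[f_A]$, so its image is exactly the set of conjugacy classes of maps of the form~\eqref{eq:Aform}; the lemma therefore asserts that a degree-$d$ endomorphism of $\PP^N$ is conjugate to some $f_A$ if and only if it is minimally critical. For the ``only if'' direction, fix a representative $A=(a_{ij})\in\GL_{N+1}$, so that $(f_A)_i=\sum_j a_{ij}X_j^d$. Then the Jacobian matrix of $f_A$ is $d\cdot(a_{i\ell}X_\ell^{d-1})_{i,\ell}$, with determinant $d^{N+1}\det(A)\,(X_0\cdots X_N)^{d-1}$; since $d\neq 0$ in $k$ and $A$ is invertible this is a nonzero scalar multiple of $(X_0\cdots X_N)^{d-1}$, so the ramification divisor of $f_A$ is $(d-1)\sum_{i=0}^N H_i$, supported on the $N+1$ coordinate hyperplanes. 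Hence $f_A$ is minimally critical, and as this depends only on the conjugacy class, $[f_A]\in\MinCrit_d^N$.

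For the converse, let $f$ be minimally critical, with $C_f=\sum_{i=0}^N(e_i-1)L_i$ where $e_i\geq 2$ and $\sum_i(e_i-1)=(N+1)(d-1)$, equivalently $\sum_i e_i=(N+1)d$. As noted in the introduction the $L_i$ meet properly, so after replacing $f$ by a conjugate we may assume $L_i=H_i$ for every $i$. The geometric heart of the argument, discussed below, is that each $f(H_i)$ is a hyperplane, that the hyperplanes $f(H_0),\dots,f(H_N)$ are distinct and in general position, and that $f^{-1}\!\left(\bigcup_i f(H_i)\right)=\bigcup_i H_i$ with $H_i$ appearing in $f^*f(H_i)$ with multiplicity exactly $e_i$; taking degrees then gives $d=\deg f^*f(H_i)=e_i$ for each $i$. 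Granting this, the branch locus $B=\bigcup_i f(H_i)$ equals $A'\!\left(\bigcup_i H_i\right)$ for some $A'\in\PGL_{N+1}$, and $f$ restricts to a finite \'etale morphism of degree $d^N$ from $T=\PP^N\setminus\bigcup_i H_i\cong\mathbb{G}_m^N$ to $T'=\PP^N\setminus B=A'(T)$. Thus $(A')^{-1}\circ f$ is a finite \'etale endomorphism of $\mathbb{G}_m^N$ permuting its $N+1$ toric boundary divisors; since the units of $k[x_1^{\pm1},\dots,x_N^{\pm1}]$ are monomials, every endomorphism of $\mathbb{G}_m^N$ is a monomial map followed by a translation, and extending to $\PP^N$ with the prescribed ramification forces the monomial part to have linear part $d$ times a lattice automorphism permuting those divisors. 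That monomial part extends to $P\circ(\mathbf X\mapsto\mathbf X^d)$ for a coordinate permutation $P$, and the translation extends to a diagonal $D\in\PGL_{N+1}$, so $(A')^{-1}\circ f=D\circ P\circ(\mathbf X\mapsto\mathbf X^d)$ and $f=(A'DP)\mathbf X^d=f_{A'DP}$. Hence $f$ is conjugate to $f_{A'DP}$, which shows $\MinCrit_d^N$ is contained in the image and completes the proof.

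The main obstacle is the geometric input in the second paragraph: that each $f(H_i)$ is a hyperplane, in general position with its companions, and that $f^{-1}$ of their union is $\bigcup_i H_i$. I would approach this by local analysis at the generic point $\eta_i$ of $H_i$: because the Jacobian of $f$ is a monomial, $f$ must be unramified along $H_i$ at $\eta_i$ and, in a direction transverse to $H_i$, must look like the $d$th power map, so that near $\eta_i$ one has $f^*f(H_i)=e_iH_i$ with no further component of $f^{-1}(f(H_i))$ meeting $H_i$; ruling out components of $f^{-1}(f(H_i))$ disjoint from $\eta_i$ (including the possibility $f(H_i)=f(H_j)$) and then using the degree identity $\deg f^*f(H_i)=d\deg f(H_i)$ together with $\sum_i e_i=(N+1)d$ and $e_i\geq2$ should force $e_i=d$ and $\deg f(H_i)=1$, and pin down the branch locus. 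When $N=1$ this is the classical fact that a bicritical rational function of degree $d$ is conjugate to $z\mapsto z^d$.
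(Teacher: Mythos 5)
Your forward direction is correct and matches the paper: the Jacobian of $f_A$ is visibly $d^{N+1}\det(A)(X_0\cdots X_N)^{d-1}$, so $f_A$ ramifies exactly along the coordinate hyperplanes.

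For the converse your plan diverges from the paper's and has two gaps. First, you cite the introduction for the fact that the ramified hyperplanes meet properly, but the introduction only asserts it; this lemma is where the paper actually proves it, by post-composing so a ramified hyperplane $H$ is preserved and then observing that $f|_H$ would inherit a minimally critical configuration on $\PP^{N-1}$ with an improper intersection, giving an induction on $N$ that bottoms out at $N=1$. Your proof should either reproduce such an argument or make explicit that you are assuming this input. Second, you make the identity $e_i=d$ contingent on the ``geometric heart'' (that the images $f(H_i)$ are hyperplanes in general position and $f^{-1}(\bigcup f(H_i))=\bigcup H_i$), which you acknowledge you have not fully established. This is unnecessary: since each $e_i-1\leq d-1$ and $\sum_i(e_i-1)=(N+1)(d-1)$ with $N+1$ summands, equality $e_i=d$ is immediate by counting, which is exactly how the paper gets total ramification before doing any geometry of the images. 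Finally, your route through finite \'etale endomorphisms of $\mathbb{G}_m^N$ and the classification of morphisms of tori as monomial-plus-translation is a genuinely different (and heavier) approach than the paper's: the paper, after a second short induction to see the images meet properly, simply post-composes by $C$ so that each coordinate hyperplane is preserved, at which point total ramification forces each component of $Cg$ to be a scalar times $X_i^d$, giving $g=A\mathbf{X}^d$ directly. Your torus argument is plausible and would work if the ``heart'' were supplied, but as written the key geometric input is only sketched, so the proof is incomplete.
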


\begin{proof}
Of course, we can compute the ramification locus of a map of the form $A\mathbf{X}^d$, and confirm that it is indeed of the sort described, so the image of $\PGL_{N+1}$ in $\mathsf{M}_d^N$ is certainly contained in $\MinCrit_d^N$.

On the other hand, suppose that $f:\PP^N\to\PP^N$ of degree $d\geq 2$ is ramified along $N+1$ hyperplanes. Since the ramification index along each hyperplane is at most $d-1$, and since the ramification divisor has degree $(N+1)(d-1)$, we see that $f$ is totally ramified along each of these hyperplanes. Suppose these hyperplanes meet improperly. This cannot happen if $N=1$, since the (in this case 2) hyperplanes would then be equal, and so we must have $N\geq 2$. Post-composing with a linear transformation does not change the ramification locus, and so we may assume, without loss of generality, that one of the ramified hyperplanes $H$ is also fixed. But then one can check that the ramification of the restricted map $f|_H:H\to H$ is supported on the restriction of the remaining hyperplanes, all of which are totally ramified. Since $H\cong \PP^{N-1}$, we now have a minimally critical example in one lower dimension with improperly intersecting ramified hyperplanes. By induction, then, the original example cannot occur.

Now that we know that the hyperplanes meet properly, we may choose $B\in \PGL_{N+1}$ moving the coordinate axes to these hyperplanes, and consider $g=f^B$, which is ramified along the coordinate hyperplanes. Since the images of these hyperplanes under $g$ must also meet properly (by a similar induction on dimension), we may also choose $C\in \PGL_{N+1}$ so that $Cg$  fixes the coordinate hyperplanes, and then we must have $Cg (\mathbf{X})= D\mathbf{X}^d$ for some diagonal matrix $D$. But then $g(\mathbf{X})=A\mathbf{X}^d$ for $A=C^{-1}D$.
\end{proof}

The next lemma gives a description of the map $\PGL_{N+1}\to \mathsf{M}_d^N$ as a quotient by a group action. Sepcifically, let $G \subseteq \PGL_{N+1}$ be the subgroup generated by the diagonal and permutation matrices, acting on $\PGL_{N+1}$ by $D\cdot A= D^{-1}AD^d$ for diagonal matrices $D$, and by the usual conjugation for symmetric matrices. Then we have the following.

\begin{lemma}
The fibres of $\PGL_{N+1}\to \mathsf{M}_d^N$ are precisely the $G$-orbits.
\end{lemma}

\begin{proof}
	It is easy to check that change-of-variables on $\PP^N$ by elements of $G$ corresponds to this action of $G$ on $\PGL_{N+1}$. That is, if $M\in G$, then \[f_A^M=M^{-1}f_A  M=f_{M\cdot A},\] where $\cdot$ here denotes the action described above and we are identifying $M$ with the associated linear endomorphism of $\PP^N$. So it suffices to show that $G$ acts transitively on fibres. Suppose that $f_A$ and $f_B$ are conjugate, so that there exists an $M\in \PGL_{N+1}$ with $M f_B=f_A M$. Then $M$ must map the critical locus of $f_B$ to that of $f_A$, and in particular must therefore permute the coordinate hyperplanes. In other words, $M=DS$ for some permutation matrix $S$, and some $D\in\PGL_{N+1}$ which fixes the coordinate hyperplanes, and hence is diagonal, and so $M\in G$.
\end{proof}

Ultimately, we would like to be able to choose a representative of the $\PGL_{N+1}$-conjugacy class of $f_A$ with a certain normal form. For instance, every product of a permutation matrix with a diagonal matrix represents the conjugacy class of the power map, but the most obvious choice for a matrix in $\GL_{N+1}$ to represent this conjugacy class is the identity matrix. In order to capture the appropriate notion of normalization, we say that the matrix $A\in\GL_{N+1}$ is a \emph{normalized lift} of the $G$-orbit of its image in $\PGL_{N+1}$ (equivalently, of the conjugacy class of $f_A$) if and only if  every row of $A^{-1}$ contains a 1.
\begin{lemma}\label{lem:ones}
Every $G$-orbit in $\PGL_{N+1}(k)$ admits a normalized lift.
\end{lemma}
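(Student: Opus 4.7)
The plan is to start with any lift $A_0 \in \GL_{N+1}(k)$ of the given class and use only the diagonal piece of the $G$-action to rescale $A_0$ into a normalized lift; the permutation piece of $G$ is not needed. The key observation is that for any invertible matrix $B$, the Leibniz expansion $\det(B) = \sum_{\pi} \operatorname{sgn}(\pi) \prod_i B_{i,\pi(i)}$ forces at least one permutation $\pi \in S_{N+1}$ with $B_{i,\pi(i)} \neq 0$ for every $i$; applied to $B_0 := A_0^{-1}$, this picks out, for each row of $A_0^{-1}$, a candidate entry to turn into a $1$.

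Fix such a $\pi$. The goal becomes: find $D = \operatorname{diag}(\lambda_0, \ldots, \lambda_N) \in \GL_{N+1}(k)$ so that $A := D^{-1} A_0 D^d$ has $(A^{-1})_{i,\pi(i)} = 1$ for every $i$. Since $A^{-1} = D^{-d} B_0 D$ has $(i,j)$-entry $\lambda_i^{-d} (B_0)_{ij} \lambda_j$, the condition translates into the system
\[\lambda_{\pi(i)} = (B_0)_{i,\pi(i)}^{-1}\, \lambda_i^d, \qquad i = 0, 1, \ldots, N.\]
Because $\pi$ is a permutation, the system decouples along its cycles. On a cycle $(i_0, i_1, \ldots, i_{k-1})$ with $\pi(i_j) = i_{j+1 \bmod k}$, iterating the relation expresses each $\lambda_{i_j}$ as a monomial in $\lambda_{i_0}$ and collapses the cycle's $k$ equations into a single equation of the form $\lambda_{i_0}^{d^k - 1} = c$ for some $c \in k^\times$. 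Since $d \geq 2$ and $k \geq 1$ we have $d^k - 1 \geq 1$, so the polynomial $X^{d^k - 1} - c$ is non-constant and has a root in the algebraically closed field $k$; the remaining $\lambda_{i_j}$ on the cycle are then determined by back-substitution. Running this on each cycle of $\pi$ produces the required $D$, and $A = D^{-1} A_0 D^d$ is a normalized lift.

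The only real obstacle is the existence of the permutation $\pi$ underlying the argument, and that is free from Leibniz. Everything downstream is routine, and it works uniformly in characteristic $0$ and in characteristic $p > d$ as in the section's standing assumption, since extracting $(d^k - 1)$-th roots in an algebraically closed field is possible regardless of whether $\gcd(d^k - 1, p) = 1$.
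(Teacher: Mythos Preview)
Your proof is correct and follows essentially the same strategy as the paper's: choose a nonzero entry in each row of $B_0=A_0^{-1}$, set up the system $\lambda_i^{-d}(B_0)_{i,\sigma(i)}\lambda_{\sigma(i)}=1$, and solve it cycle-by-cycle by extracting $(d^m-1)$th roots in the algebraically closed field $k$.

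The one noteworthy difference is that you take $\sigma=\pi$ to be a \emph{permutation}, justified cleanly via the Leibniz expansion of $\det(B_0)$, whereas the paper allows $\sigma$ to be an arbitrary self-map of $\{0,\dots,N\}$ with $B_{i,\sigma(i)}\neq 0$. Your choice simplifies the argument slightly (every index is periodic, so there is no separate ``tail'' case), but the paper's extra flexibility is actually used downstream: in Lemma~\ref{lem:goodrep} one needs to force a \emph{specific} off-diagonal entry of $A^{-1}$ to equal $1$, and for a non-diagonal $B_0$ there need not exist a permutation $\pi$ hitting a prescribed off-diagonal position (e.g.\ an upper-triangular unipotent matrix admits only $\pi=\mathrm{id}$). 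So your streamlining is fine for the present lemma, but be aware that the paper's version of the argument is what gets invoked later.
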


\begin{proof}
We claim that, for any $B\in \GL_{N+1}(k)$, there is a diagonal matrix $D\in\GL_{N+1}(k)$ such that every row of $D^{-d}BD$ contains a 1. Then for $A\in\PGL_{N+1}$, we may choose a lift of $A$ to $\GL_{N+1}$, and apply this claim to the the inverse of that lift to obtain the desired result.

If the $i,j$th entry of $B$ is $B_{i,j}$, and the $i$th diagonal entry of $D$ is $D_i$, then the $i,j$th entry of $D^{-d}BD$ will be $D_i^{-d}B_{i,j}D_j$. Since $B\in \GL_{N+1}$, we may choose a function $\sigma:\{0, ..., N\}\to \{0, ..., N\}$ such that $B_{i, \sigma(i)}\neq 0$ for each $1\leq i\leq N+1$. We will show that we may choose $D_i\in k$ so that \begin{equation}\label{eq:DiBij}D_{i}^{-d}B_{i, \sigma(i)}D_{\sigma(i)}=1\end{equation} for all $i$.

First, note that since $\{0, ..., N\}$ is a finite set, every element must be preperiodic under $\sigma$. Suppose for now that $j$ is periodic, say with $\sigma^m(j)=j$ and $m\geq 1$ minimal, and choose $D_j\in k$ so that
\begin{equation}\label{eq:Djchoose}D_j^{d^m-1}=\prod_{k=0}^{m-1}B_{\sigma^k(j), \sigma^{k+1}(j)}^{d^{m-1-k}}.\end{equation}
Once $D_j$ is so chosen, we will take
\[D_{\sigma^{k+1}(j)}=D_{\sigma^k(j)}^dB_{\sigma^k(j), \sigma^{k+1}(j)}^{-1}\]
for $0\leq k\leq m-2$ to ensure that~\eqref{eq:DiBij} is satisfied for $i\in \{\sigma(j), \sigma^2(j), ..., \sigma^{m-1}(j)\}$. We would like to know that~\eqref{eq:DiBij} is satisfied for $i=j=\sigma^m(j)$, as well.

Consider the quantity \[t_r=D_{\sigma^m(j)}D_{\sigma^r(j)}^{-d^{m-r}}\prod_{k=r}^{m-1}B_{\sigma^k(j), \sigma^{k+1}(j)}^{d^{m-1-k}}\\
\]
for $0\leq r\leq m-1$.
On the one hand, for $0\leq r\leq m-2$,
\begin{eqnarray*}
t_r & = &D_{\sigma^m(j)}D_{\sigma^r(j)}^{-d^{m-r}}\prod_{k=r}^{m-1}B_{\sigma^k(j), \sigma^{k+1}(j)}^{d^{m-1-k}}\\
&=& D_{\sigma^m(j)}(D_{\sigma^r(j)}^{-d}B_{\sigma^r(j), \sigma^{r+1}(j)})^{d^{m-r-1}}\prod_{k=r+1}^{m-1}B_{\sigma^k(j), \sigma^{k+1}(j)}^{d^{m-1-k}}\\
&=& D_{\sigma^m(j)}D_{\sigma^{r+1}(j)}^{-d^{m-(r+1)}}\prod_{k=r+1}^{m-1}B_{\sigma^k(j), \sigma^{k+1}(j)}^{d^{m-1-k}}\\
&=&t_{r+1}.
\end{eqnarray*}
On the other hand, since $\sigma^m(j)=j$, the construction~\eqref{eq:Djchoose} implies that $t_0=1$, and so it follows that
\[1=t_0=t_1=\cdots =t_{m-1}=D_{\sigma^m(j)}D_{\sigma^{m-1}(j)}^{-d}B_{\sigma^{m-1}(j), \sigma^m(j)},\]
establishing~\eqref{eq:DiBij} for $i=\sigma^m(j)=j$ as well.

To recap, we have shown that we may choose $D_i$ satisfying~\eqref{eq:DiBij} simultaneously for all $i$ in any given periodic cycle of $\sigma$, and since these cycles are disjoint, we may independently choose such $D_i$ for all periodic $i\in \{0, ..., N\}$. But now, if $i$ is not periodic, and $\sigma(i)$ has already been chosen, we may simply choose $D_i$ so that $D_i^d=B_{i, \sigma(i)}D_{\sigma(i)}$. Since every $i$ falls into a unique periodic cycle after a finite number of iterations of $\sigma$, we have eventually chosen $D_i$ for all $0\leq i\leq N$.
\end{proof}

Note, in the proof above, that if $B_{i, \sigma(i)}=1$ for all $i$ already, then the $D_i$ must all be $(d^{m}-1)$th roots of unity, and $m\leq N+1$. In particular, since there are at most $(N+1)^{N+1}$ ways to mark a position in each row of an element of $\GL_{N+1}$, there are at most a finite number of normalized representatives in each $G$-orbit, bounded just in terms of $d$ and $N$.


\section{Homogeneous Greens functions}\label{sec:local}

Let $K$ be an algebraically closed field, complete with respect to some absolute value $|\cdot|$. We will write
\[\|x_1, ..., x_k\|=\max\{|x_1|, ..., |x_k|\}.\]
Similarly, if $\Phi$ is a polynomial in any number of variables, then $\|\Phi\|$ will denote the application of this norm to the tuple of coefficients.

We will lift our map $f:\PP^N\to\PP^N$ to a homogeneous map $F:\AA^{N+1}\to\AA^{N+1}$. For maps of the form~\eqref{eq:Aform}, this amounts to choosing a representative $A\in \GL_{N+1}$ for a given matrix in $\PGL_{N+1}$, so for the remainder of this section our matrix $A$ has non-homogeneous coordinates.

Let $F_A$ be the map $\AA^{N+1}\to\AA^{N+1}$ by $\mathbf{X}\mapsto A\mathbf{X}^d$. For a homogeneous form $\Phi$ on $\AA^{N+1}$, we define
\[F_A^*\Phi(\mathbf{X})=\Phi\circ F_A(\mathbf{X})\]
and
\[{F_A}_*\Phi(\mathbf{Y})=\prod_{\mathbf{Y}=F_A(\mathbf{X})}\Phi(\mathbf{X}),\]
noting that ${F_A}_*{F_A}^*\Phi=\Phi^{d^{N+1}}$, $\deg(F_A^*\Phi)=d\deg(\Phi)$, and $\deg({F_A}_*\Phi)=d^N\deg(\Phi)$ (in this last case, the product on the right has degree $d^{N+1}$ in $\mathbf{X}$, and hence $d^N$ in $\mathbf{Y}$). If $\phi$ is the $d$th power map, and $A$ is the linear map associated to the matrix $A$, then note that $F_A^*=\phi^*A^*$, that ${F_A}_*=A_*\phi_*$, and that $A_*=(A^{-1})^*$.

For some intuition, and to guard against a possible misconception, we note that if $\Phi=0$ defines the divisor $D$ on $\PP^N$, then $F_A^*\Phi$ defines the divisor $f_A^*D$, but ${F_A}_*\Phi$ defines the divisor $\deg(f_A){f_A}_*D$, where in both cases $f_A$ is the endomorphism of $\PP^N$ defined by $F_A$.

Now, set
\begin{equation}\label{eq:greens}G_{F_A}(\Phi)=\lim_{k\to\infty}\frac{1}{d^{(N+1)k}}\log\|{F_A}_*^k\Phi\|.\end{equation}
Note that variants of this \emph{Greens function} appear throughout the literature (see, e.g., \cite{hp}), and that this definition agrees with that in~\cite{pnheights}. The basic properties of this function are as follows.  
\begin{lemma}\label{lem:greens}
The limit in~\eqref{eq:greens} exists, and satisfies
\begin{enumerate}
\item \label{it:func} $G_{F_A}({F_A}_*\Phi)=d^{N+1}G_{F_A}(\Phi)$,
\item \label{it:func2} $G_{F_A}({F_A}^*\Phi)= G_{F_A}(\Phi)$,
\item \label{it:scaling1} $G_{F_A}(\alpha\Phi)=G_{F_A}(\Phi)+\log|\alpha|$,
\item \label{it:scaling2} $G_{F_{\alpha A}}(\Phi)=G_{F_A}(\Phi)-\frac{\deg(\Phi)}{d-1}\log|\alpha|$,
\item\label{it:gupp} \begin{multline*}
G_{F_A}(\Phi)\leq \log\|\Phi\|+\frac{\deg(\Phi)}{d-1}\left(N\log\|A\|-\log|\det(A)|+\log^+|2N!|+Nd^{-N}\log^+|2|\right)\\+\frac{N}{d^{N+1}-1}\log^+|2|,  
 \end{multline*}
\item \label{it:glow}\[G_{F_A}(\Phi)\geq \log\|\Phi\|-\frac{\deg(\Phi)}{d-1}(\log\|A\|+\log^+|2|+Nd^{-N}\log^+|2|)-\frac{N}{d^{N+1}-1}\log^+|2|,\] 
\item \label{it:linear}$G_{F_A}(\Phi\Psi)=G_{F_A}(\Phi)+G_{F_A}(\Psi)$, and
\item \label{it:covariance} if $B\in G$ is a permutation-diagonal matrix, then $G_{F_A^B}(\Phi)=G_{F_A}(B_*\Phi)$.
\end{enumerate}
\end{lemma}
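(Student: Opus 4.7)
The main work is the existence of the limit and the explicit bounds (v), (vi), both of which come from a single one-step comparison of $\|{F_A}_*\Phi\|$ with $\|\Phi\|^{d^{N+1}}$, telescoped over iterates. Factor ${F_A}_* = A_* \circ \phi_*$ with $\phi$ the coordinatewise $d$-th power. For the upper bound, the plan is to evaluate pointwise on the unit polydisc: if $\|\mathbf{Y}\| \leq 1$ and $\mathbf{X}$ is any of the $d^{N+1}$ preimages, then $\mathbf{X}^d = A^{-1}\mathbf{Y}$, and the cofactor bound $|A^{-1}_{ij}| \leq |N!|\cdot\|A\|^N / |\det A|$ yields $\|\mathbf{X}\|^d \leq |N!|\cdot\|A\|^N / |\det A|$. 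Multiplying $|\Phi(\mathbf{X}_i)| \leq \|\Phi\|\cdot\|\mathbf{X}_i\|^{\deg\Phi}$ over the $d^{N+1}$ preimages and passing back from sup norm to coefficient norm (trivially non-archimedean, via Cauchy's inequalities archimedean, which introduces the $\log^+|2|$ and $\log^+|2N!|$ slack) gives a one-step bound of the shape
\[
\log \|{F_A}_*\Phi\| - d^{N+1}\log \|\Phi\| \leq d^N \deg(\Phi)\bigl(N\log\|A\| - \log|\det A| + \log^+|2N!|\bigr) + N\log^+|2|.
\]
Since $\deg({F_A}_*^j\Phi) = d^{Nj}\deg\Phi$, iterating and dividing by $d^{(N+1)k}$ yields two geometric series summing to $\frac{\deg\Phi}{d-1}$ and $\frac{N}{d^{N+1}-1}$, which simultaneously proves convergence and (v). A complementary pointwise lower bound, selecting $\mathbf{Y}_0$ in the unit polydisc at which ${F_A}_*\Phi$ approximately attains its coefficient norm and using $\|\mathbf{X}_i\|^d \geq \|\mathbf{Y}_0\|/((N+1)\|A\|)$, establishes (vi) by the same telescoping.

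Properties (i) and (iii) are immediate from the definition: (i) reindexes the limit, and (iii) uses ${F_A}_*(\alpha\Phi) = \alpha^{d^{N+1}}{F_A}_*\Phi$ since each of the $d^{N+1}$ preimages contributes a factor of $\alpha$, iterating to ${F_A}_*^k(\alpha\Phi) = \alpha^{d^{(N+1)k}}{F_A}_*^k\Phi$. Property (vii) combines the multiplicativity ${F_A}_*(\Phi\Psi) = {F_A}_*\Phi\cdot{F_A}_*\Psi$ with the Mahler--Gauss comparison $|\log\|\Phi\Psi\| - \log\|\Phi\| - \log\|\Psi\|| = O(\deg\Phi + \deg\Psi)$, whose normalized error $O(d^{Nk}/d^{(N+1)k}) = O(d^{-k})$ vanishes in the limit. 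Property (ii) then follows on applying $G_{F_A}$ to the identity ${F_A}_*{F_A}^*\Phi = \Phi^{d^{N+1}}$: by (i) the left side is $d^{N+1}G_{F_A}({F_A}^*\Phi)$, while by (vii) the right side is $d^{N+1}G_{F_A}(\Phi)$.

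For (iv), direct computation using $F_{\alpha A}^k = \alpha^{(d^k-1)/(d-1)}F_A^k$ yields ${F_{\alpha A}}_*^k\Phi = \alpha^{-\deg(\Phi)\cdot d^{Nk}(d^k-1)/(d-1)}\,{F_A}_*^k\Phi$; normalizing by $d^{(N+1)k}$, the scalar prefactor tends to $-\tfrac{\deg\Phi}{d-1}\log|\alpha|$ via $\tfrac{d^k-1}{(d-1)d^k} \to \tfrac{1}{d-1}$. For (viii), the preimage definition together with $F_A^B = B^{-1}F_AB$ gives the identity ${F_A^B}_*^k\Phi = B^*\bigl({F_A}_*^k B_*\Phi\bigr)$; dividing the elementary bound $\log\|B^*\Psi\| \leq \log\|\Psi\| + \deg(\Psi)\log\|B\|$ through by $d^{(N+1)k}$, the $\log\|B\|$ contribution is $O(d^{-k})$ and vanishes, yielding $G_{F_A^B}(\Phi) \leq G_{F_A}(B_*\Phi)$; the reverse inequality comes from the same argument applied with $B^{-1}\in G$.

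The principal obstacle is the archimedean bookkeeping in (v) and (vi): tracking precisely how the factors $|2|$ and $|N!|$ enter the cofactor bound on $A^{-1}$ and the passage between coefficient and sup norms, so that the exact stated coefficients emerge from the telescoping. The lower bound (vi) is more delicate than the upper bound (v), because lower-bounding $\|{F_A}_*\Phi\|$ in terms of $\|\Phi\|$ requires choosing an evaluation point at which the pushforward does not vanish too strongly, which amounts to controlling the placement of preimages relative to the zero locus of $\Phi$.
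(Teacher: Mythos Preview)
Your overall architecture matches the paper exactly: a one-step comparison of $\log\|{F_A}_*\Phi\|$ with $d^{N+1}\log\|\Phi\|$, telescoped to give convergence and the explicit bounds (v), (vi); and your treatments of (i)--(iv), (vii), (viii) are essentially identical to the paper's (right down to deducing (ii) from (i) and (vii) via ${F_A}_*{F_A}^*\Phi=\Phi^{d^{N+1}}$, and handling (viii) through $(F_A^B)_*^k\Phi=B^*({F_A}_*^kB_*\Phi)$ with the $B^*$ error dying as $d^{-k}$).

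The substantive difference is how the one-step bound is obtained. You propose pointwise evaluation on the unit polydisc followed by Cauchy's inequalities; the paper instead stays entirely at the level of coefficient norms. Writing ${F_A}_*=A_*\phi_*$, the paper observes that $\|\phi_*\Phi\|=\|\Phi\|^{d^{N+1}}$ (since $\phi_*\Phi$ pulled back under $\phi$ is a product of forms $\Phi(\zeta\cdot\mathbf{X})$ each with the same coefficient norm as $\Phi$), and bounds $\|A_*\Psi\|$ above and below directly: the upper bound comes from $A_*=(A^{-1})^*$ and the triangle inequality, while the lower bound comes from the identity $\Psi=A^*(A_*\Psi)$, which gives $\log\|\Psi\|\leq\log\|A_*\Psi\|+\deg(\Psi)\log\|A\|+O(1)$. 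This second step is what your pointwise argument is missing. Your sketch for (vi) picks $\mathbf{Y}_0$ where $|{F_A}_*\Phi(\mathbf{Y}_0)|\approx\|{F_A}_*\Phi\|$ and bounds $\|\mathbf{X}_i\|$ from below, but a lower bound on $\|\mathbf{X}_i\|$ gives no lower bound on $|\Phi(\mathbf{X}_i)|$ (indeed $\Phi$ could vanish at some $\mathbf{X}_i$), so the product $\prod_i|\Phi(\mathbf{X}_i)|$ is not controlled from below this way. You correctly identify this as the delicate point, but the paper's duality trick $\Psi=A^*A_*\Psi$ dissolves it completely: no evaluation point is ever chosen, and the lower bound on $\|A_*\Psi\|$ falls out of the same triangle-inequality estimate that gave the upper bound, applied in reverse.
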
 

\begin{proof}
Let $\phi$ be the $d$th power map, so that $F_A(\mathbf{X})=A\phi(\mathbf{X})$, where we are tacitly identifying the matrix $A$ with the associated linear map. First, note that by the triangle inequality
\begin{equation}\label{eq:linearpull}\log\|A^*\Phi\|=\log\|\Phi(A\mathbf{X})\|\leq \log\|\Phi\|+\deg(\Phi)\log\|A\|+\log^+\left|\binom{N+\deg(\Phi)}{N}\right|,\end{equation}
while
	\[\log^+\left|\binom{N+\deg(\Phi)}{N}\right|\leq \deg(\Phi)\log^+|2|+N\log^+|2|.\]
	Since $A^{-1}$ can be written as $\det(A)^{-1}A^{\mathrm{adj}}$, where every entry of $A^{\mathrm{adj}}$ is the determinant of some $N\times N$ submatrix of $A$, we have
	\begin{equation}\label{eq:inverselambda}\log\|A^{-1}\|\leq N\log\|A\|-\log|\det(A)|+\log^+|N!|\end{equation}
	and it follows that
	\begin{eqnarray*}
\log\|A_*\Phi\|&=&\log\|(A^{-1})^*\Phi\|\\&\leq& \log\|\Phi\|+\deg(\Phi)\log\|A^{-1}\|+\deg(\Phi)\log^+|2|+N\log^+|2|\\
		&\leq & \log\|\Phi\|+\deg(\Phi)N\log\|A\|-\deg(\Phi)\log|\det(A)|\\&&+\deg(\Phi)\log^+|2N!|+N\log^+|2|.
	\end{eqnarray*}
	But now, since $A_*A^*\Phi=\Phi=A^*A_*\Phi$, we may estimate
	\[\log\|\Phi\|\leq \log\|A_*\Phi\|+\deg(\Phi)\log\|A\|+\deg(\Phi)\log^+|2|+N\log^+|2| \]
	and
	\begin{multline*}
\log\|\Phi\|\leq \log\|A^*\Phi\|+ \deg(\Phi)N\log\|A\|-\deg(\Phi)\log|\det(A)|\\+\deg(\Phi)\log^+|2N!|+N\log^+|2|.		
	\end{multline*}
	
	Now, note that $\|\phi^*\Phi\|=\|\Phi\|$, since the homogeneous forms $\Phi$ and $\Phi\circ \phi$ have the same coefficients (albeit associated to different monomials) while the lemmas of Gau\ss~\cite[Lemma~1.6.3, p.~22]{bg} and Gelfond~\cite[Lemma~1.6.11, p.~27]{bg} give
	\begin{align*}
\log\|\phi_*\Phi\|&=\log\left\|\prod_{\zeta_0^d=\cdots=\zeta_N^d=1}\Phi(\zeta_0X_0, ..., \zeta_NX_N)\right\|\\
&=\log\prod_{\zeta_0^d=\cdots=\zeta_N^d=1}\|\Phi(\zeta_0X_0, ..., \zeta_NX_N)\|+\epsilon\\
&=d^{N+1}\log\|\Phi\|^{d^{N+1}}+\epsilon,	\end{align*}
for some $\epsilon$ with $|\epsilon|\leq N\deg(\Phi)\log^+|2|$,
	since multiplying a coefficient by a root of unity does not change its absolute value.
	 It follows that
	\begin{multline*}
	 -(\deg(\Phi)d^{-1}\log\|A\|+\deg(\Phi)d^{-1}\log^+|2|+d^{-(N+1)}N(1+\deg(\Phi))\log^+|2|)\\
		\leq d^{-(N+1)}\log\|{F_A}_*\Phi\|-\log\|\Phi\|
		\\ \leq \deg(\Phi)d^{-1}N\log\|A\|-\deg(\Phi)d^{-1}\log|\det(A)|+\deg(\Phi)d^{-1}\log^+|2N!|\\+d^{-(N+1)}N\deg(\Phi)\log^+|2|+d^{-1}N\log^+|2|
	\end{multline*}
The existence of the limit $G_{F_A}(\Phi)$, along with properties~\eqref{it:gupp} and~\eqref{it:glow}, follow from this and a standard telescoping-sum argument used by Tate and others, which we briefly recall. In general, if
\[\left|d^{-(N+1)}\log\|{F_A}_*\Phi\|-\log\|\Phi\|\right|\leq c_1\deg(\Phi)+c_2,\]
with $c_1$ and $c_2$ independent of $\Phi$, then
\begin{eqnarray*}
	\left|d^{-k(N+1)}\log\|{F_A}_*^k\Phi\|-\log\|\Phi\|\right|&\leq &\sum_{i=0}^{k-1}\left|\frac{\log\|{F_A}^{i+1}_*\Phi\|}{d^{(i+1)(N+1)}}-\frac{\log\|{F_A}^i_*\Phi\|}{d^{i(N+1)}}\right|\\
	&\leq &\sum_{i=0}^{k-1}d^{-i(N+1)}\left(c_1\deg({F_A}^i_*\Phi)+c_2\right)\\
	&\leq &\sum_{i=0}^{k-1}d^{-i(N+1)}\left(c_1d^{iN}\deg(\Phi)+c_2\right)\\
	&=&\frac{1-d^{-k}}{1-d^{-1}}c_1\deg(\Phi)+\frac{1-d^{-k(N+1)}}{1-d^{-(N+1)}}c_2.
\end{eqnarray*}
Now replacing $\Phi$ by ${F_A}_*^m\Phi$ and dividing both sides by $d^{m(N+1)}$, we see that for any fixed $\Phi$, the sequence on the right of~\eqref{eq:greens} is Cauchy, and hence converges. Properties~\eqref{it:gupp} and~\eqref{it:glow} follow from the same calculation, taking $k\to \infty$ and using the values of $c_1$ and $c_2$ from the appropriate one-sided bounds. Property~\eqref{it:func} now follows directly from the limit definition. 

For property~\eqref{it:scaling1} we need only note that ${F_A}_*(\alpha \Phi)=\alpha^{d^{N+1}}\Phi$ (directly from the definition), while item~\eqref{it:scaling2} 
follows from this along with the fact that \[{F_{\alpha A}}_*\Phi(\mathbf{X})=F_A\Phi(\alpha^{-1}\mathbf{X})={F_A}_*\alpha^{-\deg(\Phi)}\Phi(\mathbf{X})=\alpha^{-\deg(\Phi)d^{N+1}}{F_A}_*\Phi,\] again from the definition. 

Item~\eqref{it:linear} follows from Gau\ss' Lemma when the absolute value is non-archimedean, and from a comparison to Mahler's measure when $|\cdot|$ is the usual absolute value on $\CC$. Specifically, if 
\[m(\Phi)=\int_{\AA_\CC^{N+1}}\log|\Phi(z_0, ..., z_N)|d\mu(z_0)\cdots d\mu(z_N)\]
where $\mu$ is Lebesgue measure on the unit circle normalized to give circumference 1, then estimates of Mahler~\cite{mahler} give
\[\log\|\Phi\|=m(\Phi)+O(\deg(\Phi)),\]
where the implied constant depends only on $N$. Concretely,
\[m(\Phi)-\frac{N}{2}\log(\deg(\Phi)+1)\leq \log\|\Phi\|\leq m(\Phi)+N\deg(\Phi)\log 2.\]
 Then, since ${F_A}_*$ is multiplicative on homogeneous forms,
\begin{eqnarray*}
d^{-k(N+1)}\log\|{F_A}^k_*(\Phi\Psi)\| &=&	d^{-k(N+1)}m({F_A}^k_*(\Phi\Psi))\\&&+O\left(d^{-k(N+1)}\deg({F_A}^k_*(\Phi\Psi)\right)\\
&=&d^{-k(N+1)}m({F_A}^k_*\Phi)+d^{-k(N+1)}m({F_A}^k_*\Psi)\\&&+O(\left(d^{-k}\deg(\Phi\Psi)\right)\\
&=&d^{-k(N+1)}\log\|{F_A}^k_*\Phi\|+d^{-k(N+1)}\log\|{F_A}^k_*\Psi\|\\&&+O\left(d^{-k(N+1)}(\deg({F_A}_*^k\Phi)+\deg({F_A}^k_*\Psi))\right)\\&&+O(\left(d^{-k}\deg(\Phi\Psi)\right)\\
&=&d^{-k(N+1)}\log\|{F_A}^k_*\Phi\|+d^{-k(N+1)}\log\|{F_A}^k_*\Psi\|+o(1),
\end{eqnarray*}
where $o(1)\to 0$ as $k\to\infty$, which gives~\eqref{it:linear} in the archimedean case (in the non-archimedean case, $\|\cdot\|$ is already multiplicative).
 
Item~\eqref{it:func2} follows from~\eqref{it:func} and~\eqref{it:linear}:
\[G_{F_A}({F_A}^*\Phi)=\frac{1}{d^{N+1}}G_{F_A}({F_A}_*{F_A}^*\Phi)=\frac{1}{d^{N+1}}G_{F_A}(d^{N+1}\Phi)=G_{F_A}(\Phi).\]

Finally, for item~\eqref{it:covariance}, note that $(F_A^B)^k_*\Phi = B^{-1}_*{F_A}^k_*B_*\Phi$, and so the estimates above give
\[\log\|(F_A^B)^k_*\Phi\|=\log\|{F_A}^k_*B_*\Phi\|+O(1),\]
where the implied constant depends on $B$. Dividing by $d^{k(N+1)}$ and letting $k\to \infty$ eliminates the error term.
\end{proof}


\section{Lyapunov exponents and moduli space}\label{sec:lyap}

We retain the notation of the previous section, and define
\begin{equation}\label{eq:lyapdef}L(f_A)=G_{F_A}(J_{F_A})-\log|d|,\end{equation}
where $J_{F_A}$ is the usual Jacobian determinant of $F_A$, computed here as
\[J_{F_A}=d^{N+1}\det(A)\prod_{i=0}^NX_i^{d-1}.\]
For any scalar $\alpha$, Lemma~\ref{lem:greens} gives
\begin{eqnarray*}
G_{F_{\alpha A}}(J_{F_{\alpha A}})&=&G_{F_{A}}(\alpha^{N+1}J_{F_{A}})-\frac{\deg(J_{F_{\alpha A}})}{d-1}\log|\alpha|\\
&=&	G_{F_A}(J_{F_A})+(N+1)\log|\alpha|-\frac{(N+1)(d-1)}{d-1}\log|\alpha|\\
&=&G_{F_A}(J_{F_A}),
\end{eqnarray*}
 and so the definition~\eqref{eq:lyapdef} is independent of the choice of lift of $A\in \PGL_{N+1}$ to $\GL_{N+1}$.
\begin{lemma}\label{lem:lyaprel} 
In the case $K=\CC$,
 $L(f_A)$ is the sum of Lyapunov exponents of $f_A$ relative to its measure of maximal entropy.
\end{lemma}

\begin{proof}
	This is present in~\cite{pnheights}, but we sketch a quick proof here for completeness. Choose a lift $F$ of $f_A$, let $\mu_F$ be the measure of maximal entropy associated to $F$, for a homogeneous form $\Phi$ let
	\[\nu_F(\Phi)=\int_{\AA_\CC^{N+1}}\log|\Phi(z)|\mu_F.\]	
	 First, note from~\cite[Lemma~1.4]{berteloot} it suffices to show that $\nu_F(J_H)=G_F(J_F)$, where $J_F$ is the usual Jacobian determinant of $F$. More generally, we will show that $\nu_F(\Phi)=G_F(\Phi)$ for any homogeneous form $\Phi$. First, note that
	 \begin{align*}
	 	G_F(\Phi)&=\log\|\Phi\|+O(\deg(\Phi))\\
	 	&=\int_{\CC^{N+1}}\log|\Phi|(dd^c\log^+\|\cdot\|)^{N+1}+O(\deg(\Phi))\\
	 	&=\nu_F(\Phi)+O(\deg(\Phi))
	 \end{align*}
	 by the aforementioned estimates of Mahler~\cite{mahler} and the fact that $G_F=\log\|\cdot\|+O(1)$. On the other hand, the function $z\mapsto |\Phi(z)|$ satisfies $F_*|\Phi|=|F_*\Phi|$, and so
	 \[\nu_F(F_*\Phi)=\int_{\CC^{N+1}}F_*\log|\Phi|\mu_F=\int_{\CC^{N+1}}\log|\Phi|F^*\mu_F=d^{N+1}\nu_F(\Phi),\] since $F^*\mu_F=d^{N+1}\mu_F$.
	 From this we have
	 \begin{align*}
G_F(\Phi)&=d^{-k(N+1)}G_F(F_*^k\Phi)\\
&=d^{-k(N+1)}\nu_F(F_*^k\Phi)+d^{-k(N+1)}O(\deg(F_*^k\Phi))\\
&=\nu_F(\Phi)+O(d^{-k}\deg(\Phi)) 	
	 \end{align*}
	 for all $k$, and hence $G_F(\Phi)=\nu_F(\Phi)$ as claimed.
\end{proof}

%

Our next lemma involves an estimate on the local heights of the push-forwards of the coordinate hyperplanes by a linear map. For the purposes of this lemma, given $A\in\GL_{N+1}$, let $M_{i, j}$ denote the $i, j$th minor of $A$, that is, the determinant of the submatrix obtained by deleting from $A$ row $i$ and column $j$.
\begin{lemma}\label{lem:goodrep}
Every $G$-orbit in $\GL_{N+1}(K)$ other than that of the identity matrix contains a matrix $A$ with $M_{0, 1}(A)\neq 0$ and
\begin{multline}
\log\|A\|+N^2\log|M_{1, 0}(A)|\leq \\ N\sum_{i=0}^{N} \log\|A_*X_i\|+(1+N^2)\log|\det(A)|+\log^+|N!|.	
\end{multline}
\end{lemma}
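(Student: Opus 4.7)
The plan is to change variables to $B:=A^{-1}$ and reduce the lemma to an elementary claim about $B$. Writing $b_{ij}$ for its entries, from $A_*X_i(\mathbf{X})=X_i(B\mathbf{X})=\sum_j b_{ij} X_j$ we have $\|A_*X_i\|=\|B_{i\cdot}\|$ (the sup-norm of the $i$th row). Cofactor expansion gives $|M_{1,0}(A)|=|\det(A)|\cdot|b_{0,1}|$, and the hypothesis $M_{0,1}(A)\neq 0$ is equivalent to $b_{1,0}\neq 0$. Applying~\eqref{eq:inverselambda} with $B$ in place of $A$ yields $\log\|A\|\leq N\log\|B\|+\log|\det(A)|+\log^+|N!|$. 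Substituting these into the target inequality, the $\log^+|N!|$ term and the $(N^2+1)\log|\det(A)|$ term match on the two sides, so it suffices to produce a representative with $b_{1,0}\neq 0$ satisfying
\[\log\|B\|+N\log|b_{0,1}|\leq \sum_{i=0}^{N}\log\|B_{i\cdot}\|.\]

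To find a good representative, I would observe that over an algebraically closed field the $G$-orbit of the identity is exactly the set of invertible diagonal matrices: the diagonal action sends $I$ to $D\cdot I=D^{d-1}$, which ranges over all invertible diagonals, while permutation conjugation preserves diagonal form. Hence, if $A$ is not in the identity orbit, $B$ must have some nonzero off-diagonal entry $b_{i_0,j_0}$ with $i_0\neq j_0$, and one can conjugate by any permutation $S$ with $S(1)=i_0$ and $S(0)=j_0$ to produce a representative in which $b_{1,0}\neq 0$, so that $M_{0,1}(A)\neq 0$ as required.

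If after this step $b_{0,1}=0$ then $M_{1,0}(A)=0$, the left-hand side of the reduced inequality is $-\infty$, and there is nothing to prove. Otherwise I would apply Lemma~\ref{lem:ones} with the function $\sigma$ chosen so that $\sigma(0)=1$ (permitted because $b_{0,1}\neq 0$) and $\sigma(i)$ for $i\geq 1$ any column where $b_{i,\sigma(i)}\neq 0$. The resulting diagonal rescaling forces $b_{0,1}=1$, places a $1$ in every row of $B$, and, being multiplicative, preserves $b_{1,0}\neq 0$; so $\|B_{i\cdot}\|\geq 1$ for each $i$. Then $|b_{0,1}|^N=1$ and $\|B\|=\max_i\|B_{i\cdot}\|\leq \prod_i\|B_{i\cdot}\|$ since every factor is at least $1$, which is exactly the reduced inequality. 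The conceptual crux is spotting the correct reduction to $B$ in the first paragraph; once that is in hand, the rest is essentially a matter of invoking Lemma~\ref{lem:ones} with a carefully chosen $\sigma$ and verifying that the permutation and normalization steps do not disturb each other, which holds because the diagonal normalization only rescales nonzero entries.
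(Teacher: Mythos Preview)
Your argument is correct and follows essentially the same route as the paper's proof: both pass to $B=A^{-1}$, invoke Lemma~\ref{lem:ones} (choosing $\sigma$ to mark an off-diagonal entry) together with a permutation conjugation to place a $1$ in every row of $B$ with $b_{0,1}=1$, and then use the key step $\log\|B\|=\max_i\log\|B_{i\cdot}\|\leq\sum_i\log\|B_{i\cdot}\|$ combined with~\eqref{eq:inverselambda}. The only cosmetic difference is that you permute first and normalize second (handling $b_{0,1}=0$ as a trivial case), whereas the paper normalizes first and permutes second; you are also a bit more explicit than the paper about securing the separate condition $M_{0,1}(A)\neq 0$ via $b_{1,0}\neq 0$.
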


\begin{proof}
First, note that while neither side of the inequality is well-defined on $\PGL_{N+1}$, both are homogeneous of the same degree. In particular, if $A\in \GL_{N+1}$ and $\alpha\in K^*$, then
\[\log\|\alpha A\|=\log\|A\|+\log|\alpha|,\]
while
\[\log\|(\alpha A)_*X_i\|=\log\|A_*X_i\|-\log|\alpha|,\]
\[\log|\det(\alpha A)|=\log|\det(A)|+(N+1)\log|\alpha|,\]
and
\[\log|M_{i, j}(\alpha A)|=\log|M_{i, j}(A)|+N\log|\alpha|.\]

By Lemma~\ref{lem:ones} we may choose a lift of $A$ to $\GL_{N+1}$ so that $B=A^{-1}$ has a 1 on every row. Furthermore, if $A$ is not diagonal, then neither is $B$, and so if $B_{i,j}$ is the $i, j$th entry of $B=A^{-1}$, we may suppose that there is some $I\neq J$ with $B_{I, J}=1$ (the proof of Lemma~\ref{lem:ones} involves only scaling arbitrary non-zero entries to be 1). Conjugating by a permutation matrix, we may in fact choose a lift with $B_{0, 1}=1$.

Now
\begin{eqnarray}
\log\|A^{-1}\|&=&\log\max_{i=0}^{N}\max_{j=0}^{N}|B_{i, j}|\nonumber\\
&\leq & \sum_{i=0}^{N}\log\max_{j=0}^{N}|B_{i, j}|\label{eq:tricky}\\
&=& \sum_{i=0}^{N}\log\|(A^{-1})^* X_i\|\nonumber\\
&=& \sum_{i=0}^{N}\log\|A_* X_i\|\nonumber
\end{eqnarray}
where the crucial step~\eqref{eq:tricky} follows from the fact that each summand is non-negative.  We also have
\[\log\|A\|=\log\|(A^{-1})^{-1}\|\leq N\log\|A^{-1}\|+\log|\det(A)|+\log^+|N!|,\]
and so all together we have
\begin{eqnarray*}
	\log\|A\|&\leq & N\sum_{i=0}^{N}\log\|A_* X_i\|+\log|\det(A)|+\log^+|N!|\\
	&=&N\sum_{i=0}^{N}\log\|A_* X_i\|+\log|\det(A)|+\log^+|N!|-N^2\log|B_{0, 1}|\\
	&=&N\sum_{i=0}^{N}\log\|A_* X_i\|+(1+N^2)\log|\det(A)|-N^2\log|M_{1, 0}(A)|\\&&+\log^+|N!|,
\end{eqnarray*}
using the facts that $B_{0, 1}=1$ and that $\det(A)B_{0, 1}=(-1)^{I+J}M_{1, 0}(A)$.
\end{proof}

The following lemma gives our main estimates on the Lyapunov exponents, used in the next section to prove the main results.

\begin{lemma}\label{lem:lyapbound}
For any  $A\in\GL_{N+1}$ we have
	\begin{multline*}
	L(f_A)\leq N(N+1)\log\|A\|-N\det(A)+(N+1)\log^+|2N!|\\+\frac{N(N+1)(2d^{N+1}-d^N-1)}{d^N (d^{N+1}-1)}\log^+|2|+N\log|d|,		
	\end{multline*} 
while for $A$ as constructed in Lemma~\ref{lem:goodrep}, we have
	\begin{multline}\label{eq:lyaplow}L(f_A)\geq \left(\frac{d-(N^2+N+1)}{dN}\right)\log\|A\|+\frac{N(d-1)}{d}\log|M_{1, 0}(A)|\\+\frac{dN-(N^2+1)(d-1)}{dN}\log|\det(A)|
-\frac{d-1}{dN}\log^+|N!|\\-\frac{N+1}{d}\left(\frac{d^N+N}{d^N}+\frac{N(d-1)}{d^{N+1}-1}\right)\log^+|2|+N\log|d|.\end{multline}
\end{lemma}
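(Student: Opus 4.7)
The plan is to reduce everything to the additive functional $G_{F_A}$ applied to the linear coordinate forms $X_i$. Since $J_{F_A} = d^{N+1}\det(A)\prod_{i=0}^N X_i^{d-1}$, the multiplicativity property~\eqref{it:linear} together with the scaling property~\eqref{it:scaling1} of Lemma~\ref{lem:greens} yields
\[L(f_A) \;=\; N\log|d| + \log|\det(A)| + (d-1)\sum_{i=0}^N G_{F_A}(X_i).\]
Both halves of the lemma therefore reduce to two-sided bounds on $\sum_i G_{F_A}(X_i)$.

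For the upper bound I would simply apply property~\eqref{it:gupp} of Lemma~\ref{lem:greens} to each $\Phi=X_i$ (so $\deg\Phi=1$ and $\|\Phi\|=1$), sum the $N+1$ resulting inequalities, multiply by $(d-1)$, and then combine with the $\log|\det(A)|$ term already present. The $-\log|\det(A)|$ from each application contributes $-(N+1)\log|\det(A)|$, leaving a net coefficient of $-N$; the rest is routine collection of terms.

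The lower bound is the substantive half, because the direct estimate~\eqref{it:glow} applied to $X_i$ produces $\log\|A\|$ on the wrong side. The trick is the identity
\[F_A^*(A_*X_i)(\mathbf{X}) \;=\; X_i\bigl(A^{-1}A\mathbf{X}^d\bigr) \;=\; X_i^d,\]
which combined with properties~\eqref{it:func2} and~\eqref{it:linear} gives
\[d\cdot G_{F_A}(X_i) \;=\; G_{F_A}(X_i^d) \;=\; G_{F_A}(F_A^*(A_*X_i)) \;=\; G_{F_A}(A_*X_i).\]
Now the lower bound~\eqref{it:glow} applied to the linear form $A_*X_i$ (whose norm equals the max-norm of the $i$th row of $A^{-1}$) produces $\log\|A_*X_i\|$ with the right sign, at the cost of a $-\tfrac{1}{d-1}\log\|A\|$ correction. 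Dividing by $d$ and summing over $i$ yields a lower bound for $\sum_i G_{F_A}(X_i)$ in terms of $\tfrac1d\sum_i\log\|A_*X_i\|$, minus the modest penalty $\tfrac{N+1}{d(d-1)}\log\|A\|$.

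Finally, for $A$ as in Lemma~\ref{lem:goodrep}, I would feed the estimate $\sum_i\log\|A_*X_i\|\geq \tfrac1N\log\|A\|+N\log|M_{1,0}(A)|-\tfrac{1+N^2}{N}\log|\det(A)|-\tfrac1N\log^+|N!|$ into the previous line and multiply through by $(d-1)$. The two contributions to the coefficient of $\log\|A\|$ combine as
\[\frac{d-1}{dN} - \frac{N+1}{d} \;=\; \frac{d-(N^2+N+1)}{dN},\]
which is exactly the numerator appearing in the stated lower bound (and, not coincidentally, the numerator that forces the degree hypothesis of the main theorems). Everything else is bookkeeping. The only real obstacle is conceptual: spotting the identity $F_A^*(A_*X_i) = X_i^d$, which lets us pivot the useful half of Lemma~\ref{lem:greens} onto $\log\|A_*X_i\|$ and then trade it, via Lemma~\ref{lem:goodrep}, for the $\log\|A\|$ we actually want.
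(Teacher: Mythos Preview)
Your proposal is correct and follows essentially the same route as the paper: express $L(f_A)$ as $(d-1)\sum_i G_{F_A}(X_i)+\log|\det(A)|+N\log|d|$, bound above via~\eqref{it:gupp} on $X_i$, and bound below by converting to $G_{F_A}(A_*X_i)$, applying~\eqref{it:glow}, and feeding in Lemma~\ref{lem:goodrep}. The only difference is cosmetic: you obtain the key identity $d\,G_{F_A}(X_i)=G_{F_A}(A_*X_i)$ via the pullback relation $F_A^*(A_*X_i)=X_i^d$ and property~\eqref{it:func2}, whereas the paper uses the pushforward ${F_A}_*X_i=(A_*X_i)^{d^N}$ and property~\eqref{it:func}; your derivation is arguably cleaner since it avoids computing $\phi_*X_i$.
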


\begin{proof}
Note that $J_{F_A}=d^{N+1}\det(A)\prod_{i=0^N}X_i^{d-1}$, and so
\[G_{F_A}(J_{F_A})=(d-1)\sum_{i=0}^NG_{F_A}(X_i)+(N+1)\log|d|+\log|\det(A)|.\]
For the first inequality, observe simply that
\begin{eqnarray*}
L(f_A)&=&(d-1)\sum_{i=0}^NG_{F_A}(X_i)+\log|\det(A)|+N\log|d|\\
&\leq &(N+1)\left(N\log\|A\|-\log|\det(A)|+\log^+|2N!|+Nd^{-N}\log^+|2|\right)\\&&+\frac{N(N+1)(d-1)}{d^{N+1}-1}\log^+|2|+\log|\det(A)|+N\log|d|
\end{eqnarray*}
by Lemma~\ref{lem:greens}\eqref{it:gupp}, since $\log\|X_i\|=0$.

With $A$ as constructed in Lemma~\ref{lem:goodrep}, 
\begin{eqnarray*}
	\sum_{i=0}^N G_{F_A}(A_* X_i)&\geq & \sum_{i=0}^N \log\|A_* X_i\|-\frac{N+1}{d-1}(\log\|A\|+\log^+|2|+Nd^{-N}\log^+|2|)\\ &&\quad -\frac{N(N+1)}{d^{N+1}-1}\log^+|2|\\
	&\geq &\left(\frac{1}{N}-\frac{N+1}{d-1}\right)\log\|A\|+N\log|M_{1, 0}(A)|-\frac{1+N^2}{N}\log|\det(A)|\\&&-\frac{1}{N}\log^+|N!|-\left(\frac{(N+1)(1+Nd^{-N})}{d-1}+\frac{N(N+1)}{d^{N+1}-1}\right)\log^+|2|\\
	\end{eqnarray*}

Note also that since $\phi_* X_i=(-1)^{d+1}X_i^{d^N}$  we have  ${F_A}_*X_i=(-1)^{d+1}(A_* X_i)^{d^N}$, so

\begin{eqnarray*}
L(f_A)&=&G_{F_A}(J_{F_A})-\log|d|\\
&=&(d-1)\sum_{i=0}^NG_{F_A}(X_i)+\log|\det(A)|+N\log|d|\\
&=&\frac{d-1}{d^{N+1}}\sum_{i=0}^NG_{F_A}({F_A}_*X_i)+\log|\det(A)|+N\log|d|\\
&=&\frac{d-1}{d}\sum_{i=0}^NG_{F_A}(A_*X_i)+\log|\det(A)|+N\log|d|	\\
&\geq& \frac{d-1}{d}\left(\frac{1}{N}-\frac{N+1}{d-1}\right)\log\|A\|+\frac{N(d-1)}{d}\log|M_{1, 0}(A)|\\&&+\frac{dN-(N^2+1)(d-1)}{dN}\log|\det(A)|-\frac{d-1}{dN}\log^+|N!|\\
&&-\frac{N+1}{d}\left(\frac{d^N+N}{d^N}+\frac{N(d-1)}{d^{N+1}-1}\right)\log^+|2|+N\log|d|\\ 
\end{eqnarray*}
We now complete the proof of the first inequality by noting 
\[\frac{d-1}{d}\left(\frac{1}{N}-\frac{N+1}{d-1}\right)=\frac{d-(N^2+N+1)}{dN}.\]

\end{proof}

We note that the upper bound on $L(f_A)$ is very natural, and is well-defined on $\PGL_{N+1}$. Indeed, \[\lambda_{\PGL_{N+1}}(A)=-\frac{1}{N+1}\log|\det(A)|+\log\|A\|\] is the N\'{e}ron function on $\PP^{(N+1)^2-1}\supseteq\PGL_{N+1}$ relative to the divisor defined by the vanishing of the determinant (normalized to have degree 1) and the standard metric. Since $L(f_A)$ depends only on the conjugacy class of $f_A$, the second inequality gives
\[L(f_A)\leq N(N+1)\inf_{B\in G\cdot A}\lambda_{\PGL_{N+1}}(B)+O(1),\]
and the infimum above is a natural measure of the proximity of the class of $f_A$ to the boundary of $\MinCrit_d^N$.

The lower bound in Lemma~\ref{lem:lyapbound} also references the distance to the vanishing of some off-diagonal minor, and it is not clear that it has a natural interpretation in terms of pluripotential theory on $\PGL_{N+1}$.

For our next lemma, which will be used in the next section to relate the global heights on $\PGL_{N+1}$ and $\mathsf{M}_d^N$, it is useful to introduce a N\'{e}ron function on $\Hom_d^N$. For any $f\in \Hom_d^N$ choose a lift $F$ (that is, an affine map given by a particular choice of representatives for the homogeneous coefficients of $f$), and set
\[\lambda_{\Hom_d^N}(f)=-\frac{1}{(N+1)d^N}\log|\Res(F)|+\log\|F\|,\]
which is independent of the choice of lift. Note that this is just the usual N\'{e}ron function on the affine variety $\Hom_d^N$ with respect to the standard metric, normalized for the degree of the hypersurface at infinity.

\begin{lemma}\label{lem:coc}
Let $B\in\PGL_{N+1}(K)$. Then
\[\lambda_{\operatorname{Hom}_d^N}(f^B)\leq \lambda_{\operatorname{Hom}_d^N}(f)+ (d+N)\lambda_{\PGL_{N+1}}(B)+(d+N)\log^+|2|+\log^+|(N+1)!|.\]
\end{lemma}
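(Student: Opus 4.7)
The plan is to work with the explicit lift $F^B(\mathbf{X})=B^{-1}F(B\mathbf{X})$ of $f^B$, estimating its sup-norm and computing its resultant in terms of $F$ and $B$. First, I would exploit the fact that $K$ is algebraically closed to normalize the chosen representative of $B$: starting from any lift $B_0\in\GL_{N+1}(K)$ of the given class, I pick $\alpha\in K^\times$ with $\alpha^{N+1}=\det(B_0)^{-1}$ and replace $B_0$ by $\alpha B_0$. This does not change the point of $\PGL_{N+1}$ represented by $B$ nor the value of $\lambda_{\PGL_{N+1}}(B)$, but achieves $|\det(B)|=1$, so that $\lambda_{\PGL_{N+1}}(B)=\log\|B\|$.

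Next I would compute $|\Res(F^B)|$. Using the standard transformation laws for the Macaulay resultant, namely $\Res(LF)=\det(L)^{d^N}\Res(F)$ for linear $L$ and $\Res(F\circ M)=\det(M)^{(N+1)d^N}\Res(F)$ for a linear change of variables $M$, applied to $F^B=B^{-1}\circ F\circ B$, I obtain $\Res(F^B)=\det(B)^{Nd^N}\Res(F)$. With the normalization above this reduces to $|\Res(F^B)|=|\Res(F)|$, so the resultant contributions to $\lambda_{\Hom_d^N}(f^B)-\lambda_{\Hom_d^N}(f)$ cancel exactly, leaving only the sup-norm terms to bound.

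For the sup-norm I would apply the estimate~\eqref{eq:linearpull} from the proof of Lemma~\ref{lem:greens} to each coordinate form of $F$, giving $\log\|F(B\mathbf{X})\|\leq\log\|F\|+d\log\|B\|+\log^+|\binom{N+d}{N}|$. Applying $B^{-1}$ on the left replaces each coordinate of $F(B\mathbf{X})$ by a sum of at most $N+1$ scalar multiples bounded by $\|B^{-1}\|\cdot\|F(B\mathbf{X})\|$, contributing an extra $\log\|B^{-1}\|+\log^+(N+1)$. Using~\eqref{eq:inverselambda} and $|\det(B)|=1$, $\log\|B^{-1}\|\leq N\log\|B\|+\log^+|N!|$, and collecting everything yields
\[\log\|F^B\|\leq\log\|F\|+(d+N)\log\|B\|+\log^+|\binom{N+d}{N}|+\log^+(N+1)+\log^+|N!|.\]
Combined with the resultant step above, this gives the inequality modulo packaging the combinatorial constants: $\binom{N+d}{N}\leq 2^{N+d}$ absorbs the binomial into $(d+N)\log^+|2|$, while $\log^+(N+1)+\log^+|N!|\leq\log^+|(N+1)!|$ (equality in the archimedean case, both sides zero otherwise) produces the final constant. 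The main obstacle here is not any single step but the careful bookkeeping that matches the constants to the precise form stated; the underlying resultant identity is automatic once the Macaulay scaling laws are invoked.
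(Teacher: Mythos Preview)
Your approach is essentially the same as the paper's: both bound $\log\|F^B\|$ by separately estimating the pre-composition $F\circ B$ via~\eqref{eq:linearpull} and the post-composition by $B^{-1}$ via~\eqref{eq:inverselambda}, and both handle the resultant term through the Macaulay transformation laws. The only difference is cosmetic: you normalize so that $|\det(B)|=1$ and thereby avoid tracking the determinant contributions, while the paper carries $\log|\det(B)|$ through the computation and watches it cancel at the end.

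One small correction: the change-of-variables law you quote, $\Res(F\circ M)=\det(M)^{(N+1)d^N}\Res(F)$, has the wrong exponent. The correct formula is $\Res(F\circ M)=\det(M)^{d^{N+1}}\Res(F)$, and combining with $\Res(LF)=\det(L)^{d^N}\Res(F)$ gives $\Res(F^B)=\det(B)^{d^N(d-1)}\Res(F)$ rather than $\det(B)^{Nd^N}\Res(F)$. This slip is harmless in your argument precisely because you arranged $|\det(B)|=1$, so any power of $\det(B)$ contributes nothing; but it is worth recording the right exponent.
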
 

\begin{proof}
We suppose that we have chosen affine lifts of $f$ and $B$, called $F$ and $B$, so that we may speak of their coordinates.

First, note that  we have
\begin{multline*}
\log\|B^{-1}\circ F\|\leq \log\|B^{-1}\|+\log\|F\|+\log^+|N+1|\\\leq \log\|F\|+ N\log\|B\|-\log|\det(B)|+\log^+|(N+1)!|	
\end{multline*}
by the triangle inequality and~\eqref{eq:inverselambda}.

On the other hand, for each coordinate function $F_i$ we have
\[\log\|F_i\circ B\|\leq \log\|F_i\|+\deg(F_i)\log\|B\|+(\deg(F_i)+N)\log^+|2|\]
by~\eqref{eq:linearpull} above. So
\[\log\|F\circ B\|\leq \log\|F\|+d\log\|B\|+(d+N)\log^+|2|.\]

Finally, we have
\[\Res(F^B)=\Res(F)\det(B)^{d^N(d-1)},\]
by \cite[Theorem~3.13, p.~399]{lang} and  \cite[Corollary~3.14, p.~400]{lang}, so
\begin{eqnarray*}
	\lambda_{\Hom_d^N}(f^B)&=&-\frac{1}{(N+1)d^N}\log|\Res(F^B)|+\log\|F^{B}\|\\
	&\leq &-\frac{1}{(N+1)d^N}\log|\Res(F)|-\frac{d-1}{N+1}\log|\det(B)|\\
	&&+\log\|F\circ B\|+ N\log\|B\|-\log|\det(B)|+\log^+|(N+1)!|\\
	&\leq & -\frac{1}{(N+1)d^N}\log|\Res(F)|+\log\|F\|
	\\&&-\frac{N+1+d-1}{N+1}\log|\det(B)|+(d+N)\log\|B\|\\&&+(d+N)\log^+|2|+\log^+|(N+1)!|\\
	&=&\lambda_{\Hom_d^N}(f)+(d+N)\lambda_{\PGL_{N+1}}(B)+(d+N)\log^+|2|\\&&+\log^+|(N+1)!|.
\end{eqnarray*}
\end{proof}


\section{Global fields, proofs of the main results}\label{sec:global}

In this section, we let $K$ denote a \emph{product formula field}, that is, a field with a set of inequivalent valuations $M_K$ with associated absolute values $|\cdot|_v$ and weights $n_v$ such that for any $\alpha\neq 0$ from $K$, we have $|\alpha|_v=1$ for all but finitely many $v\in M_K$, and
\[\prod_{v\in M_K}|\alpha|_v^{n_v}=1.\]
The archetypal example for us will be when $K$ is a number field, $M_K$ is the usual set of absolute values (that is, the set of absolute values extending the usual and $p$-adic absolute values on $\QQ\subseteq K$), and $n_v=[K_v:\QQ_v]/[K:\QQ]$. Another key example comes from algebraic geometry, where any finitely generated extension $K$ of an algebraically closed field $k$ (a \emph{function field}) comes equipped with a natural set of absolute values $M_K$, which are nontrivial if $K\neq k$. Specifically, any such field is the function field of a normal projective variety $X/k$~\cite[Lemma~1.4.10, p.~12]{bg}, and for every prime divisor $D$ on $X$ we may construct an absolute value on $K$ by
\[|f|_D=e^{-\ord_D(f)}.\]
With weights $n_D=\deg(D)$ these satisfy the product formula. Note that $|f|_D\leq 1$ for all $D$ if and only if $f\in k(X)$ is constant.

For every $v\in M_K$ there is an algebraically closed field $\CC_v\supseteq K$ which is complete with respect to some extension of $|\cdot|_v$, and quantities from Section~\ref{sec:local} and~\ref{sec:lyap} constructed relative to this completion are here distinguished with a subscript $v$.
For $\alpha\in K$ we define the height by
\[h(\alpha)=\sum_{v\in M_K}n_v\log^+|\alpha|_v,\]
and similarly for heights in projective space.
Note that if $L/K$ is a finite separable extension of fields, if $M_L$ is the set of absolute values on $L$ extending elements of $M_K$, and if $n_w=n_v[L_w:K_v]/[L:K]$ for an absolute value $w\in M_L$ extending $v\in M_K$, then the height as defined above is independent of the field in which it is computed~\cite[Chapter~1]{bg}. We will, then, allow ourselves to make finite separable extensions of the base field at any time (for instance in our invocation of Lemma~\ref{lem:goodrep} in the following proof), effectively working over the separable closure of $K$.

\begin{proof}[Proof of Theorems~\ref{th:main} and~\ref{th:exp}]
For any polarized endomorphism $f$ of $\PP^N$, let $h_{\mathcal{O}(1)}$ be the canonical height function for pure cycles on $\PP^N$ introduced by Zhang~\cite{zhang}, and set
\[\hat{h}_f(D)=\deg(D)h_{\mathcal{O}(1)}(D)\]
for divisors $D$ on $\PP^N$. Note that the properties of Zhang's height~\cite[Theorem~2.4]{zhang} now give
\begin{gather*}
\hat{h}_f(f_*D)=\deg(f)^N\hat{h}_f(D),\\ 
\hat{h}_f(D)=h(D)+O_f(\deg(D)),
\intertext{ where $h(D)=\sum_{v\in M_K}n_v\log\|\Phi\|_v$ for any homogeneous form $\Phi$ on $\AA^{N+1}$ whose vanishing defines $D$ (see~\cite{pnheights} for a uniform estimate on the error),
as well as }
\hat{h}_f(nD)=n\hat{h}_f(D)
\end{gather*}
(indeed, the third property follows from the first two). We now define
\[\hat{h}_{\mathrm{crit}}(f)=\hat{h}_f(C_f),\]
where $C_f$ is the critical divisor of $f$.
This is not the same ``critical height'' as defined in~\cite{pcfpn}, although there is a relation (see~\cite{pnheights} for more on this).

For $A\in \PGL_{N+1}(K)$, choose a lift to $\GL_{N+1}$ (also denoted $A$), and let $F_A(\mathbf{X})=A\mathbf{X}^d$ be the affine endomorphism of $\AA^{N+1}$ corresponding to $f_A$, recalling that we are free to make a finite separable extension of the base field when doing this. For any homogeneous form $\Phi$ whose vanishing defines a divisor $D$ on $\PP^N$,
we have from Lemma~\ref{lem:greens} that
\begin{eqnarray*}
\sum_{v\in M_K}n_vG_{F_A, v}(\Phi)&=&\sum_{v\in M_K}n_v\left(\log\|\Phi\|_v+O_v(\deg(D))\right)\\
&=&h(D)+O(\deg(D))\\&=&\hat{h}_{f_A}(D)+O(\deg(D)),	
\end{eqnarray*}
where the implied constant depends on $F_A$ (note in particular that the error terms vanish for all but finitely many $v\in M_K$, justifying our summing them). But now ${F_A}_*^k\Phi$ defines $d^k{f_A}_*^kD$, and so applying the above to this homogeneous form gives
\begin{eqnarray*}
\sum_{v\in M_K}n_vG_{F_A, v}(\Phi)&=&d^{-k(N+1)}\sum_{v\in M_K}n_vG_{F_A}({F_A}_*^k\Phi)\\
&=&d^{-k(N+1)}\hat{h}_{f_A}(d^k{f_A}_*^kD)\\&&+d^{-k(N+1)}O(\deg(d^k{f_A}_*^kD))\\
&=&	\hat{h}_{f_A}(D)+d^{-k}O(\deg(D)),
\end{eqnarray*}
since $\deg({f_A}_*D)=d^{N-1}\deg(D)$. Taking $k\to\infty$, we obtain
\[\hat{h}_{f_A}(D)=\sum_{v\in M_K}n_vG_{F_A, v}(\Phi).\]
Note that we could equivalently take this as the definition of the canonical height, establish the properties above, and deduce \emph{post hoc} the relation to Zhang's height.

Writing $J_{F_A}=\det(DF_A)$ for the usual Jacobian determinant, whose vanishing defines $C_{f_A}$,
\begin{eqnarray*}
	\sum_{v\in M_K}n_vL_v(f_A)&=&\sum_{v\in M_K}n_vG_{F_A, v}(J_{F_A})+\sum_{v\in M_K}n_v\log|d|_v\\
	&=& \hat{h}_{f_A}(C_{f_A})\\
	&=&\hat{h}_{\mathrm{crit}}(f_A),
\end{eqnarray*}
by the product formula.

It now follows from Lemma~\ref{lem:lyapbound} that
\begin{eqnarray*}
	\hat{h}_{\mathrm{crit}}(f_A)&=&\sum_{v\in M_K}n_vL_v(f_A)\\
	&\leq &N(N+1)\sum_{v\in M_K}n_v\log\|A\|_v\\&&-N\sum_{v\in M_K}n_v\log|\det(A)|_v\\&& +(N+1)\sum_{v\in M_K}n_v\log^+|2N!|_v\\&&+\frac{N(N+1)(d-1)}{d^{N+1}-1}\sum_{v\in M_K}n_v\log^+|2|_v\\&&+N\sum_{v\in M_K}n_v\log|d|_v\\
	&=&N(N+1)h_{\PGL_{N+1}}(A)+(N+1)h (2N!)\\&&+\frac{N(N+1)(d-1)}{d^{N+1}-1}h(2).
\end{eqnarray*}
Of course, if $K$ is a number field, then $h(2)=\log 2$, and similarly for $h(2N!)$.

Before completing this direction of the proof, we introduce one more lemma, whose proof we defer.
\begin{lemma}\label{lem:conjugate}
	Let $f:\PP^N_K\to \PP^N_K$ be minimally critical of degree $d\geq 2$. Then $f\sim f_A$ for some $A$ with
\begin{multline*}
h_{\PGL_{N+1}}(A)\leq
(1+(d+N)(N+1))h_{\Hom_d^N}(f)+(d+N)h(N!)+(d+N)(N+1)h(d)\\+(d+N+1)h((N+1)!)+(d+N)(1+2N(N+1)(d-1))h(2)	
\end{multline*}
\end{lemma}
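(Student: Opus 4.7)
The plan is to produce an explicit conjugating matrix $B \in \PGL_{N+1}$ with $f^B = f_A$ whose height is controlled by $h_{\Hom_d^N}(f)$, and then propagate the bound through the cocycle estimate of Lemma~\ref{lem:coc}. Following the argument of Lemma~\ref{lem:mincritdes}, I would take $B$ so that $B^{-1}$ has rows equal to the coefficients of the $N+1$ linear forms $L_0,\ldots,L_N$ defining the critical hyperplanes of $f$, together with a diagonal adjustment which lies in the normalization group $G$ of Lemma~\ref{lem:ones}; then $f^B$ is ramified along the coordinate hyperplanes, and the normalization argument in Lemma~\ref{lem:mincritdes} shows $f^B = f_A$ for some $A \in \PGL_{N+1}$.

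The bound on $h_{\PGL_{N+1}}(B)$ proceeds in two pieces. First, the Jacobian $J_F = \det(DF)$ of any affine lift $F$ of $f$ is a polynomial of degree exactly $N+1$ in the coefficients of $F$; expanding the determinant and applying the triangle inequality at each place yields $h(J_F) \leq (N+1)\, h_{\Hom_d^N}(f) + O(1)$. Second, since $f$ is minimally critical, $J_F = c \prod_{i=0}^N L_i^{d-1}$ up to scalar, and a standard height inequality for polynomial factors (Gauss's lemma at non-archimedean places, Mahler measure at archimedean places) gives $\sum_i h(L_i) \leq \frac{1}{d-1}\, h(J_F) + O(1)$. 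Combining these with the inverse-matrix estimate~\eqref{eq:inverselambda} (and the product formula applied to $\det L$) produces
\[h_{\PGL_{N+1}}(B) \;\leq\; N \sum_i h(L_i) + O(1) \;\leq\; \frac{N(N+1)}{d-1}\, h_{\Hom_d^N}(f) + O(1) \;\leq\; (N+1)\, h_{\Hom_d^N}(f) + O(1),\]
where the last step uses $d > N^2+N+1 \geq N+1$.

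The final step applies Lemma~\ref{lem:coc} place by place and sums over $v \in M_K$ to obtain
\[h_{\Hom_d^N}(f^B) \;\leq\; h_{\Hom_d^N}(f) + (d+N)\, h_{\PGL_{N+1}}(B) + (d+N) h(2) + h((N+1)!),\]
which combined with the preceding estimate yields the asserted coefficient $1 + (d+N)(N+1)$ on $h_{\Hom_d^N}(f)$. One then checks directly that $\|F_A\|_v = \|A\|_v$ and $\Res(F_A) = \det(A)^{d^N}$ at every place, so the two N\'{e}ron functions agree: $\lambda_{\Hom_d^N,v}(f_A) = \lambda_{\PGL_{N+1},v}(A)$, and therefore $h_{\PGL_{N+1}}(A) = h_{\Hom_d^N}(f_A) = h_{\Hom_d^N}(f^B)$, completing the proof. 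The main obstacle is the careful bookkeeping of the Mahler-measure error terms in the factor-height inequality for $\sum_i h(L_i)$ and managing the ambiguity in the normalization of the $L_i$ via the product formula, so as to recover the explicit constants $h(N!)$, $h(d)$, $h((N+1)!)$, and $h(2)$ asserted in the statement.
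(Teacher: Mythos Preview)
Your approach is essentially that of the paper: build $B$ so that $B^{-1}=DL$ where $L$ has rows the linear forms $L_0,\dots,L_N$ defining the critical hyperplanes (with a diagonal $D$ chosen so each row of $DL$ contains a $1$), bound $\sum_i h(L_i)$ via the Jacobian factorization and the Mahler/Gau\ss\ estimates, pass to $h_{\PGL_{N+1}}(B)$ via the inverse-matrix inequality~\eqref{eq:inverselambda}, then sum Lemma~\ref{lem:coc} over all places and identify $h_{\Hom_d^N}(f_A)=h_{\PGL_{N+1}}(A)$ from $\Res(F_A)=\det(A)^{d^N}$ and $\|F_A\|=\|A\|$. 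The only wrinkle is your last simplification $\tfrac{N(N+1)}{d-1}\le N+1$, which uses $d>N$ rather than the lemma's hypothesis $d\ge 2$; the paper sidesteps this by bounding $\sum_i\log\|L_i\|_v$ directly by $\log\|\prod_i L_i^{d-1}\|_v$ plus error (without dividing through by $d-1$), arriving at the coefficient $(N+1)$ for all $d\ge 2$.
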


Assuming this lemma (which we prove below), and by the invariance of $\hat{h}_{\mathrm{crit}}(f_A)$ under conjugation, we may apply~\cite[Lemma~6.32, p.~102]{barbados} to obtain
\[\hat{h}_{\mathrm{crit}}(f)\leq N(N+1)\inf_{f_A\sim f}h_{\PGL_{N+1}}(A)+O_{K, N, d}(1)\ll h_{\mathsf{M}_d^N}(f),\]
for any minimally critical $f:\PP^N_K\to\PP^N_K$.
Indeed, this direction is known for endomorphims of $\PP^N$ in general~\cite{pnheights}.

In the other direction, assume that $A\in\GL_{N+1}$ is a normalized lift of its $G$-orbit in $\PGL_{N+1}$ (recalling that by Lemma~\ref{lem:goodrep}, every orbit has such a lift). Then in Lemma~\ref{lem:lyapbound} we obtain~\eqref{eq:lyaplow} in each absolute value. Taking the sum of both sides over all absolute values, weighted by the $n_v$, and noting that any term of the form $\log|\alpha|_v$ will vanish in this sum (by the product formula) we have
\begin{eqnarray*}
	\hat{h}_{\mathrm{crit}}(f_A)&=&\sum_{v\in M_K}n_vL_v(f_A)\\
	&\geq &\left(\frac{d-(N^2+N+1)}{dN}\right)h_{\PGL_{N+1}}(A)-\frac{d-1}{dN}h(N!)\\&&-\frac{N+1}{d}\left(\frac{N+d^N}{d^N}+\frac{N(d-1)}{d^{N+1}-1}\right)h(2).
\end{eqnarray*}
Note also that since $B\mapsto f_B$ induces a morphism $\PGL_{N+1}\to \mathsf{M}_d^N$, we have $h_{\mathsf{M}_d^N}(f_B)\ll h_{\PGL_{N+1}}(B)$ by the triangle inequality (where the implied constants depend on $d$ and $N$; one could also again apply~\cite[Lemma~6.32, p.~102]{barbados}).
Since every $G$-orbit admits a normalized lift, it follows that
\[\hat{h}_{\mathrm{crit}}(f_A)\geq \left(\frac{d-(N^2+N+1)}{dN}\right)\inf_{f_B\sim f_A}h_{\PGL_{N+1}}(B)-O_{K, d, N}(1)\gg h_{\mathsf{M}_d^N}(f_A),\] by Lemma~\ref{lem:conjugate}
as long as $d>N^2+N+1$, since both sides are now well-defined on moduli space.
\end{proof}

\begin{proof}[Proof of Lemma~\ref{lem:conjugate}]
	Summing the inequality in Lemma~\ref{lem:coc} over all places, we see that for any endomorphism $f:\PP^N\to\PP^N$ and $B\in \PGL_{N+1}$, we have
\[h_{\Hom_d^N}(f^B)\leq h_{\operatorname{Hom}_d^N}(f)+ (d+N)h_{\PGL_{N+1}}(B)+(d+N)h(2)+h((N+1)!)\]
	We also know that if $f$ is minimally critical, there is a $B$ such that $f^B$ has the form~\eqref{eq:Aform}, so all that remains is to bound the height of that matrix $B$. 
		
	Let $F$ be a lift of $f$. Since $f$ is minimally critical, for $J_F$ the usual Jacobian determinant, we have
\[J_F(\mathbf{X})=\prod_{i=0}^NL_i(\mathbf{X})^{d-1}.\]	
for some linear forms $L_i$, which are of course not uniquely determined by this relation, but whose coefficients we fix. Note that by above-cited estimates of Mahler~\cite{mahler} which give approximate multiplicativity of $\|\cdot\|$ in the archimedean case (and using Gau\ss' Lemma in the non-archimedean setting), we have for any $v\in M_K$
\begin{eqnarray*}
\sum\log\|L_i\|_v&\leq& \log\left\|\prod_{i=0}^NL_i^{d-1}\right\|+2N(N+1)(d-1)\log^+|2|_v\\
&=&\log\left\|\det\left(\frac{\partial F_i}{\partial X_j}\right)\right\|_v+2N(N+1)(d-1)\log^+|2|_v\\
&\leq& (N+1)\log\max\left\|\frac{\partial F_i}{\partial X_j}\right\|_v+\log^+|(N+1)!|_v\\&&+2N(N+1)(d-1)\log^+|2|_v\\
&\leq & (N+1)\log\|F\|_v+(N+1)\log^+|d|_v+\log^+|(N+1)!|_v\\&&+2N(N+1)(d-1)\log^+|2|_v
\end{eqnarray*}
since the entries of $J_F$ are partial derivatives of the forms defining $F$.
Summing over all places, we have
\begin{multline*}
\sum_{i=0}^{N+1}h(L_i)\leq (N+1)h_{\Hom_d^N}(f)\\+(N+1)h(d)+h((N+1)!)+2N(N+1)(d-1)h(2).	
\end{multline*}

 Now, let $L$ be the matrix whose $i$th row consists of the coefficients of $L_i$, let $D$ be any diagonal matrix, and let $B=L^{-1}D^{-1}$. Since $J_{F^B}=B^*J_F$, we have
\[J_{F^B}=\alpha\prod_{i=0}^{N+1} X_i^{d-1},\]
for some scalar $\alpha$, 
whence $F^B=A\mathbf{X}^d$ for some $A\in \GL_{N+1}$. So any matrix $B$ of this form will suffice.

Note that if $M\in \GL_{N+1}$ is a matrix with rows corresponding to linear forms $M_0, ..., M_N$, it is not in general true that \begin{equation}\label{eq:rows}h_{\PGL_{N+1}}(M)\leq \sum_{i=0}^{N+1} h(M_i),\end{equation}
for instance because the right-hand-side will vanish if $M$ is diagonal.
But~\eqref{eq:rows} becomes true if we insist that each row contain a 1. To see this, just note that in each absolute value $|\cdot|_v$, we would then have
\[\log\|M\|_v=\max_{0\leq i \leq N}\log\|M_i\|_v\leq \sum_{i=0}^{N+1}\log\|M_i\|_v.\]
In particular, we can choose $D$ so that each row of $DL$ contains a 1, and obtain
\begin{eqnarray*}
h_{\PGL_{N+1}}(B)&\leq & Nh_{\PGL_{N+1}}(B^{-1})+h(N!)\\
&= & Nh_{\PGL_{N+1}}(DL)+h(N!)\\
&\leq& \sum_{i=0}^N h(D_iL_i) + h(N!)\\
&\leq &(N+1) h_{\Hom_d^N}(f)+O_{d, N}(1),
\end{eqnarray*}
since $h(D_iL_i)=h(L_i)$.
We we have proven the result with \begin{multline*}
 	h_{\PGL_{N+1}}(B)\leq (N+1)h_{\Hom_d^N}(f)\\+h(N!)+(N+1)h(d)+h((N+1)!)+2N(N+1)(d-1)h(2),\end{multline*} and hence
\[h_{\PGL_{N+1}}(A)\leq (1+(N+d)(N+1))h_{\Hom_d^N}(f)+O_{d, N}(1).\]
Note that the $O_{d, N}(1)$ term involves only heights of integers, and so vanishes in the function field setting.
\end{proof}

\begin{proof}[Proof of Corollary~\ref{th:heightbound}]
Let $K$ be a number field. From the proof of Theorem~\ref{th:exp}, if $f:\PP^N\to\PP^N$ is minimally critical of degree $d> N^2+N+1$, then $f$ is conjugate to $f_A$ for some $A\in \PGL_{N+1}(\overline{K})$ satisfying
\begin{multline*}
h_{\PGL_{N+1}}(A)\leq \frac{d-1}{d-(N^2+N+1)}\log N!\\+\left(\frac{N(N+1)((d^{N+1}-1)+N(d-1))}{(d^{N+1}-1)(d-(N^2+N+1))}\right)\log 2.
\end{multline*}
It remains to estimate the degree of an extension over which $A$ is defined. 

The construction in Lemma~\ref{lem:goodrep} comes from Lemma~\ref{lem:ones}, which involves taking $(d^m-1)$th roots for every $m$-cycle of a function $\sigma$ from the set $\{0, ..., N\}$ to itself, so if there are $t$ periodic points in total, in cycles of length $m_1$, ..., $m_r$, this requires an extension of degree
\[\prod_{i=1}^r(d^{m_i}-1)\leq \prod_{i=1}^rd^{m_i}=d^t.\]
Then, each strictly preperiodic point of $\sigma$ (suppose there are $s$ of these) requires an additional extension of degree at most $d$, showing that $A$ is defined over some extension $L/K$ satisfying
\[[L:K]\leq d^{t+s}\leq d^{N+1}.\]
\end{proof}

\begin{proof}[Proof of Theorem~\ref{th:rigid}]
Let $k$ be an algebraically closed field of characteristic 0, and let $f:V\dashrightarrow\MinCrit_d^N\subseteq\mathsf{M}_d^N$ be a rational map describing a family, the generic fibre of which is PCF. That is, $f$ is a $K=k(V)$ point on $\mathsf{M}_d^N$ corresponding to a minimally critical PCF map.

 After perhaps making a finite extension and a change of variables, we can assume without loss of generality that $f=f_A$ for some $A\in\PGL_{N+1}(K)$, and that $V$ is normal and projective. 

Now, for each irreducible divisor $D$ on some smooth normal model of $V$, we obtain a $D$-adic absolute value on $K$, which is non-archimedean and non-$p$-adic, and these absolute values satisfy the product formula. It follows from Lemma~\ref{lem:lyapbound}, after summing over all places (and possibly changing variables once again) that
\[\left(\frac{d-(N^2+N+1)}{dN}\right)h_{\PGL_{N+1}}(A)\leq \hat{h}_{\mathrm{crit}}(f_A)\leq N(N+1)h_{\PGL_{N+1}}(A).\]
If $f_A$ PCF, then the middle term vanishes, and so $h_{\PGL_{N+1}}(A)=0$, given that $d>N^2+N+1$. But this means that the entries of $A$ are functions on $V$ with no poles, and hence are constant.
\end{proof}

We conclude with an estimate on the difference between the naive and canonical heights of divisors in general for minimally critical maps, which is a more precise special case of a result in~\cite{pnheights}, and which is obtained by summing the inequlities in Lemma~\ref{lem:greens} over all places. Note again that, for $m\in \ZZ^+$, we have $h(m)=\log m$ in the number field case, and $h(m)=0$ in the function field case.
\begin{prop}
Let $f_A$ be as in~\eqref{eq:Aform}, and let $D$ be any effective divisor on $\PP^N$. Then 
\begin{multline*}
-\frac{\deg(D)}{d-1}h_{\mathrm{PGL_{N+1}}(A)}-\left(\frac{\deg(D)}{d-1}+\frac{N}{d^{N+1}-1}\right)h(2)\\ \leq \hat{h}_{f_A}(D)-h(D)\\ \leq \frac{N\deg(D)}{d-1}h_{\mathrm{PGL_{N+1}}(A)}+\frac{\deg(D)}{d-1}h(2N!)+\left(\frac{N\deg(D)}{d^N}+\frac{N}{d^{N+1}-1}\right)h(2).
\end{multline*}
\end{prop}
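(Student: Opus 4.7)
The plan is to pick a homogeneous form $\Phi$ of degree $\deg(D)$ whose vanishing defines $D$, apply the two one-sided bounds of Lemma~\ref{lem:greens}, parts \eqref{it:gupp} and \eqref{it:glow}, at each place $v\in M_K$ to a chosen affine lift $F_A$ of $f_A$ (fixing once and for all a lift $A\in\GL_{N+1}(K)$), and then sum the inequalities over $M_K$ with weights $n_v$. No new ingredients are needed beyond what has already been assembled in the proof of Theorems~\ref{th:main} and~\ref{th:exp}.

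First I would record the identity
\[\hat{h}_{f_A}(D)=\sum_{v\in M_K}n_v\,G_{F_A,v}(\Phi),\]
which was established (for any homogeneous form defining $D$) in the course of the proof of Theorems~\ref{th:main} and~\ref{th:exp}, and the tautology $h(D)=\sum_{v\in M_K}n_v\log\|\Phi\|_v$. At each place $v$, Lemma~\ref{lem:greens}\eqref{it:gupp} gives
\[G_{F_A,v}(\Phi)-\log\|\Phi\|_v\leq \frac{\deg(D)}{d-1}\bigl(N\log\|A\|_v-\log|\det(A)|_v+\log^+|2N!|_v\bigr)+\frac{N}{d^{N+1}-1}\log^+|2|_v,\]
and Lemma~\ref{lem:greens}\eqref{it:glow} gives the corresponding lower bound
\[G_{F_A,v}(\Phi)-\log\|\Phi\|_v\geq -\frac{\deg(D)}{d-1}\bigl(\log\|A\|_v+\log^+|2|_v\bigr)-\frac{N}{d^{N+1}-1}\log^+|2|_v.\]

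Summing either inequality against the weights $n_v$, the left-hand side becomes exactly $\hat{h}_{f_A}(D)-h(D)$. On the right-hand side, $\sum_v n_v\log\|A\|_v=h_{\PGL_{N+1}}(A)$ (this is well-defined on $\PGL_{N+1}$ since the product formula kills scalar rescalings of the lift), while $\sum_v n_v\log^+|2|_v=h(2)$ and $\sum_v n_v\log^+|2N!|_v=h(2N!)$. The only term that requires a moment's thought is $\sum_v n_v\log|\det(A)|_v$, but since $\det(A)\in K^\times$, the product formula makes this sum vanish identically. Collecting the remaining terms on each side reproduces the two inequalities of the proposition exactly.

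There is essentially no obstacle here; the proof is a bookkeeping exercise, and the only place where one has to be slightly careful is noting that the $-\log|\det(A)|_v$ contribution in the upper bound drops out globally by the product formula, so it is legitimate to write the final estimate in terms of the projective height $h_{\PGL_{N+1}}(A)$ alone.
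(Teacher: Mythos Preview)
Your proof is correct and follows exactly the approach the paper intends: the paper simply states that the proposition ``is obtained by summing the inequalities in Lemma~\ref{lem:greens} over all places,'' and you have carried out precisely that bookkeeping, correctly noting that the $\log|\det(A)|_v$ term vanishes globally by the product formula.
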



\begin{thebibliography}{9}
\bibitem{berteloot} G.~Bassanelli and F.~Berteloot, Bifurcation currents in holomorphic dynamics on $\mathbb{P}^k$. \emph{J.\ Reine Angew.\ Math.} \textbf{608} (2007), pp.~201--235.
\bibitem{bk} J.~Belk and S.~Koch, Iterated monodromy for a two-dimensional map. \emph{In the tradition of Ahlfors-Bers.\ V, 1–11}, 
Contemp.\ Math.\ \textbf{510}, AMS, Providence, RI, 2010.
\bibitem{bijl} R.~Benedetto, P.~Ingram, R.~Jones, and A.~Levy, Attracting cycles in $p$-adic dynamics and height bounds for postcritically finite maps. \emph{Duke Math.\ J.} \textbf{163} (2014), no.~13, pp.~2325--2356.
\bibitem{bd} J.~Briend and J.~Duval, Deux caract\'{e}risations de la mesure d'\'{e}quilibre d'un endomorphisme de $\mathbf{P}^k(\mathbf{C})$. \emph{Pub.\ Math. de l'IH\'{E}S} \textbf{93}, 145 (2001). 
\bibitem{bg} E.~Bombieri and W.~Gubler, \emph{Heights in Diophantine Geometry}, volume~4 of \emph{New Mathematical Monographs}. Cambridge University Press, Cambridge, 2006.
\bibitem{ds} T.-C.~Dinh and N.~Sibony, Dynamics in several complex variables: endomorphisms of projective spaces and polynomial-like mappings. in \emph{Holomorphic dynamical systems}, pp.~165-–294, \emph{Lecture Notes in Math.}, 1998, Springer, Berlin, 2010. 
\bibitem{dupont} C.~Dupont, Exemples de Latt\`{e}s et domaines faiblement sph\'{e}riques de $\CC^n$. \emph{Manuscripta Math.} \textbf{111} (2003), pp.~357--378.
\bibitem{fs} J.E.~Forn\ae ss and N.~Sibony, Critically finite rational maps on $\mathbf{P}^2$. \emph{The Madison Symposium on Complex Analysis (Madison, WI, 1991)}, pp.~245–260, 
Contemp.\ Math.\ \textbf{137}, AMS, Providence, RI, 1992. 
\bibitem{gv} T.~Gauthier and G.~Vigny, The Geometric Dynamical Northcott Property For Regular Polynomial Automorphisms of the Affine Plane. \texttt{arXiv:1912.07907}
\bibitem{hp} J.H.~Hubbard and P.~Papadopol,  Superattractive fixed points. \emph{Cn.\ Indiana Univ.\ Math.\ J.} \textbf{43} (1994), no.~1, pp.~321-–365.
\bibitem{pcfpoly} P~Ingram, A finiteness result for post-critically finite polynomials. \emph{
Int.\ Math.\ Res.\ Not.} \textbf{2012} no.~3 (2012), pp.~524--543.
\bibitem{pcfpn} P.~Ingram, Rigidity and height bounds for certain post-critically finite endomorphisms of $\PP^N$.  \emph{Canadian J.\ Math.} \textbf{68} (2016), pp.~625-654.
\bibitem{hcrit} P.~Ingram, The critical height is a moduli height. \emph{Duke Math.\ J.} \textbf{167}, no.~7 (2018), pp.~1311-1346.
\bibitem{pcfzar} P.~Ingram, R.~Ramadas, and J.~H.~Silverman, Zariski non-density of certain PCF endomorphisms of $\PP^N$. Preprint: \texttt{arXiv:1910.11290}
\bibitem{axd} P.~Ingram, Minimally critical regular endomorphisms of $\AA^N$. To appear in \emph{ergodic Theory and Dynamical Systems}, \text{doi:https://doi.org/10.1017/etds.2021.114.}
\bibitem{pnheights} P.~Ingram, Explicit heights of divisors relative to endomorphisms of $\PP^N$. (\texttt{arXiv:2207.07206})
\bibitem{jonsson} M.~Jonsson, Sums of Lyapunov exponents for some polynomial maps of $\mathbf{C}^2$. \emph{Ergodic Theory Dynam.\ Systems} \textbf{18} (1998), pp.~613--630.
\bibitem{lang} S.~Lang, \emph{Algebra. 
Revised third edition.}, volume~211 of \emph{Graduate Texts in Mathematics}. Springer, New York, 2002.
\bibitem{mahler} K.~Mahler, On some inequalities for polynomials in several variables.  \emph{J.\ London Math.\ Soc.} \textbf{37} (1962) pp.~341-344. 
\bibitem{milnor} J.~Milnor, On rational maps with two critical points. \emph{Exp.\ Math.} \textbf{9} (2000), Issue 4, pp.~~481--522.
\bibitem{miller} W.~Miller, The Maximum Order of an Element of a Finite Symmetric Group. \emph{Amer.\ Math.\ Monthly} \textbf{94} (1987), no.~6, pp.~497--506.
\bibitem{ads} J.~H.~Silverman, \emph{The Arithmetic of Dynamical Systems}, volume~241 of \emph{Graduate Texts in Mathematics}. Springer, New York, 2007.
\bibitem{barbados} J.~H.~Silverman, \emph{Moduli Spaces and Arithmetic Dynamics}, volume~30 of \emph{CRM Monograph Series}. AMS, Providence, 2012.
\bibitem{zhang} S.W.~Zhang,  Small points and adelic metrics. \emph{J.\ Algebraic Geom.} \textbf{4} (1995), no.~2, pp.~281-300.
\end{thebibliography}
\end{document}